\newcommand{\mathleft}{\@fleqntrue\@mathmargin0pt}
\newcommand{\mathcenter}{\@fleqnfalse}
\newtheorem{theorem}{Theorem}[section]
\newtheorem{lemma}[theorem]{Lemma}
\newtheorem{definition}[theorem]{Definition}
\newtheorem{corollary}[theorem]{Corollary}
\newtheorem{remark}[theorem]{Remark}
\newtheorem{proposition}[theorem]{Proposition}
\newtheorem{example}[theorem]{Example}
\newtheorem{thmx}{Theorem}
\newtheorem*{corollary C}{Corollary C}
\newtheorem*{theorem B}{Theorem B}
\DeclareMathOperator{\interior}{\text{int}}
\DeclareMathOperator{\A}{\textbf{\textit{a}}}
\DeclareMathOperator{\B}{\textbf{\textit{b}}}
\DeclareMathOperator{\C}{\textbf{\textit{c}}}
\DeclareMathOperator{\U}{\textbf{\textit{u}}}
\DeclareMathOperator{\V}{\textbf{\textit{v}}}
\DeclareMathOperator{\const}{\text{const.}}
\begin{document}
\title{Absolutely Continuous Invariant Measure for Generalized Horseshoe Maps}
\author{Abbas Fakhari and Maryam Khalaj}
\address{Department of Mathematics, Shahid Beheshti university, Tehran 19839, Iran}
\email{a\_fakhari@sbu.ac.ir}
\address{Department of Mathematics, Shahid Beheshti university, Tehran 19839, Iran}
\email{m\_khalaj@sbu.ac.ir}
\begin{abstract}
In this paper, we study the SRB measures of generalized horseshoe map. We prove that under the conditions of transversality and fatness, the SRB measure is actually absolutely continuous with respect to the Lebesgue measure.
\end{abstract}


\maketitle
\section{Introduction}
The transversality condition was introduced in the 90s to calculate the Hausdorff dimension of the self-similar sets and to find absolutely continuous invariant measures.
Roughly speaking, two families $\mathcal{F}$ and $\mathcal{G}$ of curves in $\mathbb{R}^2$ are transverse if {\em almost all} pair $(f,g)$, with $f\in\mathcal{F}$ and $g\in\mathcal{G}$ are transversal with uniform slope. Under the transversality condition, Pollicott and Simon \cite{PS} determined the Hausdorff dimension of the missing digit sets $$\Lambda(\lambda)=\{\sum_{k=1}^\infty i_k\lambda^k:i_k=0,1,3\}$$ for almost every $\lambda\in(\frac{1}{4},\frac{1}{3})$.
Solomyak \cite{S2} proved the Lebesgue measure of $\Lambda(\lambda)$, for almost every $\lambda>1/3$, is positive provided the transversality condition holds. Motivated by this scheme, it is shown that if the transversality condition holds for a certain contracting affine iterated function system in $\mathbb{R}^d$, $d\geq 2$, then the Hausdorff dimension of the attractor is the minimum of $d$ and the singularity dimension \cite{JPS}.

The first motivating classical example in higher dimension is {\em generalized baker map} $B_\lambda:[-1,1]^2\to [-1,1]^2$ defined by
$$B_\lambda(x,y)=\begin{cases} (2x-1,\lambda y+1-\lambda) \quad & x\geq 0\\
(2x+1,\lambda y-1+\lambda) \quad & x< 0.\end{cases}$$
The transversality condition implies that for almost every $\lambda\in (\frac{1}{2},1]$, map $B_\lambda$ admits an absolutely continuous ergodic measure \cite{S2}. Generalizing Baker maps, Tsujii \cite{T} showed that the SRB measure of any transversal solenoidal attractor $T:S^1\times \mathbb{R}\to S^1\times \mathbb{R}$ defined by
$$T(x,y)=(\ell x,\lambda y+f(x)),$$
is absolutley continuous, where $f$ is a $C^2$ function, $\ell\geq2$ a natural number, $0<\lambda<1$ and $\ell\lambda>1$. After that, inspiring Tsujii's strategy, Rams \cite{R} provided a geometric approach to prove the absolute continuity of the SRB measure for a generalized map $T:S^1\times \mathbb{R}^d\to S^1\times \mathbb{R}^d$ defined by $$T(x,y)=(f(x),g(x,y)),$$ where $f$ is a $k$ to $1$ expanding map and $g$ is a contraction.

In this paper, we study a class of dynamical systems having the most extended structure called {\em generalized horseshoe maps}. The generalized horseshoe map initially defined by Jakobson and Newhouse in \cite{JN1} to detect the SRB measure in the most general case.
The generalized horseshoe map is a piecewise hyperbolic map defined on a countable family of vertical strips.
We show that the two assumptions of area-expanding and the transversality of the unstable manifolds lead to absolute continuity of the SRB measure.
\begin{thmx}\label{T1}
For any transversal fat generalized horseshoe map, the SRB measure is absolutlely continuous with respect to the Lebesgue measure.
\end{thmx}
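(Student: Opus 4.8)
The plan is to follow the transfer-operator strategy of Tsujii \cite{T} and Rams \cite{R}: realize the SRB measure $\mu$ as a weak-$*$ limit of iterated push-forwards of a smooth density and prove a uniform $L^2$ bound on these densities, from which $\mu\ll\leb$ follows at once. I would first fix the symbolic picture of the generalized horseshoe $T$: the inverse branches of $T^n$ are indexed by length-$n$ admissible words $\omega$ over the (countable) alphabet of vertical strips, each branch $T_\omega^{-n}$ being a well-defined contraction onto a thin curvilinear rectangle, and I would record a bounded-distortion estimate for the branch Jacobians $J_\omega=\bigl|\det DT_\omega^{-n}\bigr|$. The reduction step is then standard: starting from a smooth probability density $\rho_0$ supported near the attractor, set $\rho_n=\sum_{|\omega|=n} J_\omega\,(\rho_0\circ T_\omega^{-n})$, so that $\rho_n\,d\leb=(T^n)_*(\rho_0\,d\leb)$; any weak-$*$ limit of $\rho_n\,d\leb$ is the SRB measure, and if $\sup_n\|\rho_n\|_{L^2}<\infty$ then by weak compactness and lower semicontinuity of the $L^2$ norm the limit has an $L^2$ density, giving absolute continuity.

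Next I would expand the square,
\begin{equation*}
\|\rho_n\|_{L^2}^2=\sum_{\omega}\int J_\omega^2\,(\rho_0\circ T_\omega^{-n})^2\,d\leb+\sum_{\omega\neq\omega'}\int J_\omega J_{\omega'}\,(\rho_0\circ T_\omega^{-n})(\rho_0\circ T_{\omega'}^{-n})\,d\leb,
\end{equation*}
and treat the two sums separately. For the diagonal sum the \emph{fatness} hypothesis is decisive: area-expansion means $|\det DT|>1$, hence the inverse Jacobians are uniformly exponentially small, $\sup_z J_\omega(z)\le C\theta^n$ with $\theta<1$, while $\sum_\omega\int J_\omega\,d\leb$ stays bounded because $T^n$ preserves total mass; pulling out the supremum bounds the diagonal by $C\theta^n\|\rho_0\|_\infty\to 0$. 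Thus the entire difficulty is concentrated in the off-diagonal sum, whose integrand for a pair $(\omega,\omega')$ is supported on the \emph{coincidence set} where the two inverse branches simultaneously land in $\operatorname{supp}\rho_0$, i.e. geometrically on the set where the unstable images $T^n_\omega$ and $T^n_{\omega'}$ run within the width of $\rho_0$ of each other.

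Here the \emph{transversality} hypothesis enters in exactly the Solomyak manner. Because distinct unstable leaves cross with a slope difference bounded below by a uniform constant, the measure of the coincidence set at scale $\varepsilon$ is $O(\varepsilon)$ rather than the $O(1)$ that parallel leaves would produce; combined with the bounded-distortion control of $J_\omega,J_{\omega'}$ on the rectangle where they meet, each off-diagonal term is comparable to $J_\omega J_{\omega'}$ times a transversality constant, and summing over all admissible pairs reduces to controlling $\bigl(\sum_\omega J_\omega\bigr)^2$ against the transverse separation rate. Fatness again provides the summability, guaranteeing the double sum converges uniformly in $n$. I expect this off-diagonal estimate to be the main obstacle: one must make the transversality bound uniform over the countably many pairs of branches, which forces a summability hypothesis on the deep strips built into the definition of a fat horseshoe, and one must verify that the nonlinearity of $T$ does not destroy the uniform lower bound on crossing angles along entire unstable leaves — both of which I would isolate as preliminary lemmas (a distortion lemma and a transversality-propagation lemma) before assembling them into the final uniform $L^2$ estimate and concluding $\mu\ll\leb$.
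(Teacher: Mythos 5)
Your overall strategy---an $L^2$ pair-correlation bound in which transversality controls off-diagonal terms---is the right family of arguments, and it is indeed the Tsujii--Rams scheme the paper follows (the paper applies Tsujii's criterion to the disintegration $\mu_F=\int\mu_x\,d\mu_g$ along stable manifolds rather than to push-forwards of smooth densities, but that difference is largely cosmetic). The genuine gaps are in how you use the two hypotheses. First, you assume that distinct unstable leaves ``cross with a slope difference bounded below by a uniform constant,'' and you propose to prove this as a ``transversality-propagation lemma.'' No such lemma is available: uniform transversality of \emph{all} pairs is false in general and is never assumed. The transversality hypothesis of the theorem (Definition \ref{4.2}) is a much weaker, aggregated condition: it only requires that the weighted total volume $\sum_{([\A]_n,[\B]_m)\in {\bf M}^{NTr}_\delta(r)}\mathrm{vol}(\hat{U}_{[\A]_n}\cap\hat{U}_{[\B]_m})\,|I_{[\A]_n}|\,|I_{[\B]_m}|$ over \emph{non}-transversal pairs stays $O(r^2)$. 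Consequently the off-diagonal sum must be split into $\delta$-transversal and non-$\delta$-transversal pairs, with the latter controlled by hypothesis, not by geometry; your proposal has no mechanism at all for non-transversal pairs, and your step ``the measure of the coincidence set at scale $\varepsilon$ is $O(\varepsilon)$'' fails exactly there.

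Second, even for transversal pairs the volume bound \eqref{vol} applies only when the first symbols differ ($a_1\neq b_1$); pairs sharing a long common prefix are nearly parallel at their own scale, and their intersection volume is \emph{not} bounded by $\delta^{-1}d([\A]_n)d([\B]_m)$. The paper handles these by factoring out the prefix: writing the pair as $([\C]_i[\A]_n,[\C]_i[\B]_m)$ with $a_1\neq b_1$, pushing the estimate through $F_{[\C]_i}$ with distortion control (Proposition \ref{pro4}, Remark \ref{R2}, inequality \eqref{21}), and then summing over the prefix length $i$. It is precisely this summation that needs the exponent $\epsilon$ in the fatness condition: $|I_{[\C]_i}|/d([\C]_i)\leq K_1(d([\C]_i))^{\epsilon}\leq K_1|J|^{\epsilon}M^{i\epsilon}$ with $M=\sup_{z\in\Lambda}D^sF(z)<1$, which turns $\sum_i I_i(r)$ into a convergent geometric series. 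In your proposal, fatness is used only as area expansion ($|\det DT|>1$) to kill the diagonal terms; that is a strictly weaker use, and without the $M^{i\epsilon}$ gain the common-prefix portion of your off-diagonal sum diverges (each prefix length contributes a term of constant size). A further, smaller gap: in this discontinuous, countable-branch setting, identifying a weak-$*$ limit of $(T^n)_*(\rho_0\,\leb)$ with the SRB measure is not free; the paper avoids this issue by constructing $\mu_F$ through the expanding factor $g$ and bounding $\mu_x(B^s_r(z))$ directly by the widths $|I_{[\A]_n}|$ via the invariance identity \eqref{Inv}, without introducing auxiliary densities.
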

Theorem \ref{T1} has novelties in some ways. The continuity of the map is removed and, unlike the classical case of skew-product, the boundary of strips have non-zero curvature. These conditions accompanied by the infinity of the strips force further calculation for adaptation.

The generalized horseshoe map is defined in Section 2. Section 3 is devoted to the proof of the existence of an SRB measure. The precise definition of the fatness and transversality conditions are presented in Subsection 4.1.  Subsection 4.2 consists of the essential lemmas to control the distortions. Finally, the absolute continuity of the SRB measure is shown in Subsection 4.3.
\section{Generalized Horseshoe Map (GHM)}\label{s2}
In this section, we introduce the model we are dealing with in this paper. Suppose that $\{S_1,S_2,\cdots\}$ is a countable collection of closed curvilinear rectangles in $S=[0,1]^2$ whose interiors are non-overlapping and covering $S$ up to a subset of zero Lebesgue measure. Each $S_i$ is full height whose left and right boundaries are graphs of smooth functions. Let $F_i=(F_{i1},F_{i2})$ be a $C^2$ diffeomorphism on $S_i$ and let $U_i$ be the image of $S_i$ under $F_i$ which is full width and bounded by graphs of smooth functions from top and bottom.
Suppose that for constants $0<\alpha<1$ and $K_0>1$, the map $F$ on $S$ given by $F|_{\interior  S_i}=F_i$ satisfies the \textit{hyperbolicity conditions} described in \cite{JN}, so the following cone conditions hold:
\begin{enumerate}[label=\textbf{H\arabic*}]
\item $DF(\mathcal{C}_\alpha^u)\subseteq \mathcal{C}_{\alpha}^u$ and $DF^{-1}(\mathcal{C}_\alpha^s)\subseteq \mathcal{C}_{\alpha}^s$,
\item \label{H2} $|DF(v)|\geq K_0|v|,\,\text{for}\,v\in \mathcal{C}^u_\alpha$ and $|DF^{-1}(v)|\geq K_0|v|,\,\text{for}\,v\in \mathcal{C}^s_\alpha$.
\end{enumerate}
where $\mathcal{C}_\alpha^s=\{(v_1,v_2):|v_1|\leq \alpha|v_2|\}$ and $\mathcal{C}_\alpha^u=\{(v_1,v_2):|v_2|\leq \alpha|v_1|\}$, and we use the max norm i.e. $|v|=|(v_1,v_2)|=\max\{|v_1|,|v_2|\}$. The map $F$ with the above conditions is called the \textit{generalized horseshoe map}. Also, for each $i$ and $z\in S_i$, the hyperbolic conditions yield \cite[Lemma 4.1]{JN}

\begin{equation}\label{5}
\frac{|F_{i1y}(z)|}{|F_{i1x}(z)|}\leq \alpha,
\end{equation}
\begin{equation}\label{6}
\frac{|F_{i2x}(z)|}{|F_{i1x}(z)|}\leq \alpha,
\end{equation}
\begin{equation}\label{7}
\frac{|F_{i2y}(z)|}{|F_{i1x}(z)|}\leq \frac{1}{K_0^2}+\alpha^2.
\end{equation}
Let $\Sigma_\mathbb{N}^\infty:=\{(a_i)_{i=1}^\infty\mid a_i\in\mathbb{N}\}$, $[\A]_n:=(a_i)_{i=1}^n$ and $F_{[\A]_n}:=F_{a_n}\circ\cdots\circ F_{a_1}$.

The stable and unstable manifolds can be described in two following approaches.

\begin{itemize}
\item {\it Analytical Definition.}~\\
For any $X=(x_n)_{n\in\mathbb{Z}}$ in the inverse limit space $\overleftarrow{M}=\{(x_n)_{n\in\mathbb{Z}},\,\, F(x_n)=x_{n+1}\}$, put
$$E^u(X)=\bigcap_{n\geq 0} DF^{n}(x_{-n})(\mathcal{C}^u_\alpha(x_{-n})),$$
$$E^s(X)=\bigcap _{n\geq 0} DF^{-n}(x_n)(\mathcal{C}^s_\alpha(x_n)).$$
$E^s(X)$ and $E^u(X)$ are stable and unstable directions at $X$. By the definition, $E^s(X)$ only depends on the 0th position of $X$. By Hadamard-Perron Theorem, directions $E^s$ and $E^u$ are integrable (see \cite{P} for a complete discussion).
\item {\it Geometrical Definition.}~\\
For any finite word $[\A]_n$, put
$$S_{[\A]_n}:=S_{a_1}\cap F_{a_1}^{-1}(S_{a_2\ldots a_n})$$
and $U_{[\A]_n}:=F_{[\A]_n}(S_{[\A]_n})$ (see Figure \ref{GH1}). For any infinite word $\A\in \Sigma_\mathbb{N}^\infty$, put
$$W^s_{\A}:=\bigcap_{n\geq 1} S_{[\A]_n}, \quad W^u_{\A}:=\bigcap_{n\geq 1} U_{[\A]_n}.$$
Actually, the set $\Lambda:=\bigcup_{\A}\bigcap_{n\geq1}U_{[\A]_n}$ defines a topological attractor for $F$. The stable and unstable manifolds $W_{\A}^s$ and $W_{\A}^u$ are graphs of $C^1$ functions defined in any point of $\Lambda$. Note that any point of $\Lambda$ has a unique stable manifold and, probably, non-unique unstable manifold.
\end{itemize}
\begin{figure}
\def\svgwidth{7cm}
\def\svgwidth{7cm}
\includegraphics{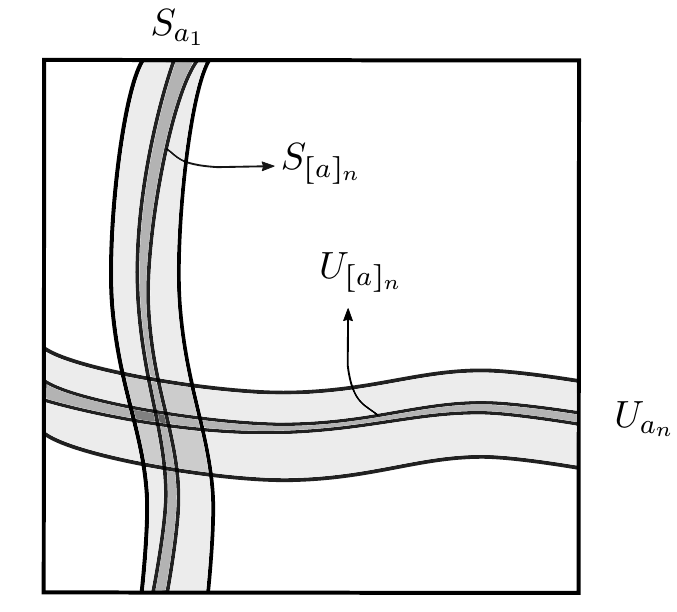}
\centering
\caption{Stable and Unstable Strips in a Generalized Horseshoe Map}
\label{GH1}
\end{figure}
\section{SRB measure for GHM}
There are classical approaches for finding SRB measures for Anosov endomorphisms which are not applicable in our case because of the existence of discontinuities in the GHM. Jakobson and Newhouse have already proved that the GHM has an SRB measure \cite{JN}. Salas has tried in \cite{S} to present a shorter proof, however, the proof seems not to be complete.

In this section, we provide a short proof for the existence of an SRB measure essentially based on the absolute continuity of the stable manifolds. Roughly speaking, our method is to find an invariant measure whose disintegration along unstable manifolds is equivalent to the Lebesgue measure. First, we study the special case of skew-products inspired by Tsujii in \cite{T} and then, we deal with the general case in which the stable manifolds may have non-zero curvature.

\subsubsection{Absolute Continuity of Stable Manifolds}
The main tool to find the SRB measure is the absolute continuity of the stable manifolds which is proved in \cite{JN}.

Let $\tilde{S}_0=\bigcup_i\interior(S_i)$ and define inductively $\tilde{S}_n=\tilde{S}_0\cap F^{-1} (\tilde{S}_{n-1})$, for $n\geq1$. Let $\tilde{S}=\bigcap_{n\geq0}\tilde{S}_n$. Then $\tilde{S}$ is a $F$-invariant full Lebesgue measure set. Clearly $\tilde{S}$ is $W^s$-saturated, where $W^s$ is the stable lamination. Let $D_1$ and $D_2$ be two disks in $S$ transverse to the stable lamination and $H_{D_1,D_2}$ be the holonomy map along the stable lamination, that is $H_{D_1,D_2}:D_1\cap \tilde{S}\to D_2\cap \tilde{S}$
$$H_{D_1,D_2}(z_1):=W^s(z_1)\cap D_2,$$
where $W^s(z_1)$ is the unique stable leaf of $W^s$ through $z_1$. Suppose that $m_{D}$ is the Lebesgue measure induced by the Riemannian metric on the disk $D$. Jakobson and Newhouse used some geometric and distortion conditions to prove the absolute continuity of stable lamination, meaning that the measure $\tilde{m}_{D_2}=(H_{D_1,D_2})_*m_{D_1}$ is equivalent to $m_{D_2}$. Hence, there is a measurable map $J:D_2\to[0,+\infty)$ which is integrable with respect to $\tilde{m}_{D_2}$ such that for any Borel set $A\subseteq D_2$, we have
$$m_{D_2}(A)=\int_AJd\tilde{m}_{D_2}.$$

\subsubsection{Lifting to an SRB measure}
Here, we use the classical lifting procedure and absolute continuity of stable lamination to generate an SRB measure for $F$ (see also \cite{V}).

Suppose that $p^s:\tilde{S}\to[0,1]$ is the projection along the stable leaves and let $I_i=p^s(S_i)$. Define $g:\bigcup_iI_i\to[0,1]$ by
$$g(x):=p^s\circ F(x,0).$$
In this case, $g(I_i)=[0,1]$, for each $i$, and also $p^s\circ F=g\circ p^s$ on $\tilde{S}$. Since the stable lamination is absolutely continuous and the stable leaves are of codimension one, Jacobian $J$ of the holonomy map is essentially smooth (see \cite{Ro}), this leads $g$ to be piecewise expanding. According to Folklore theorem \cite{A,W}, $g$ has an absolutely continuous invariant probability measure, ACIP, say $\mu_g$.
\begin{proposition}
The map $F$ has an SRB measure.
\end{proposition}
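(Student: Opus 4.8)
The plan is to lift the ACIP $\mu_g$ of the expanding quotient $g$ to an $F$-invariant measure on $\tilde{S}$ whose conditional measures along the unstable leaves $W^u$ are equivalent to arc length, using the semiconjugacy $p^s\circ F=g\circ p^s$ together with the absolute continuity of the stable lamination. First I would fix a reference measure $\nu_0$ on $\tilde{S}$ that already carries the desired transverse regularity: a measure whose disintegration along the unstable leaves is equivalent to arc length and which satisfies $(p^s)_*\nu_0=\mu_g$. Such a $\nu_0$ exists because $\mu_g$ is absolutely continuous on $[0,1]$ and, by the absolute continuity of the stable holonomy (with Jacobian $J$), arc length on the unstable leaves can be glued measurably across leaves so as to prescribe the $p^s$-projection.

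Next I would form the Cesàro averages $\mu_n=\frac{1}{n}\sum_{k=0}^{n-1}F^k_*\nu_0$ and extract a weak-$*$ convergent subsequence $\mu_{n_j}\to\mu$, which is possible since the phase space $S=[0,1]^2$ is compact. Because $p^s\circ F=g\circ p^s$ and $\mu_g$ is $g$-invariant, every $\mu_n$ projects to $\mu_g$, and hence $(p^s)_*\mu=\mu_g$. Invariance $F_*\mu=\mu$ follows from the telescoping identity $F_*\mu_n-\mu_n=\frac{1}{n}(F^n_*\nu_0-\nu_0)$, provided $\mu$ charges no part of the discontinuity set of $F$, namely the strip boundaries $\partial S_i$; this is guaranteed once $\mu$ is known to have absolutely continuous unstable conditionals, since these boundaries meet each unstable leaf in a set of zero arc length.

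The decisive point is to propagate the absolute continuity of the unstable conditionals from $\nu_0$ to the limit $\mu$. Here the cone condition \ref{H2} and the second-order bounds \eqref{5}--\eqref{7}, together with the $C^2$ regularity of each $F_i$, yield uniform distortion estimates for $F$ along unstable leaves, so that each $F^k_*\nu_0$, and hence each average $\mu_n$, again disintegrates along unstable leaves into conditionals whose densities lie in a fixed bounded class (bounded above and away from zero). I would then show that this class is closed under the weak-$*$ limit by a Rokhlin disintegration argument, testing against continuous functions and using the absolute continuity of the stable holonomy to identify, for $\mu$-almost every unstable leaf, its conditional measure with the pushforward of $\mu_g$ under the inverse of $p^s$ restricted to that leaf; since $\mu_g$ is absolutely continuous, so are these conditionals. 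This yields the required SRB measure.

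The main obstacle I anticipate is precisely this last step: controlling the unstable conditionals in the weak-$*$ limit in the presence of the three nonclassical features of the model. The distortion must be controlled uniformly over the countable family of branches rather than finitely many; the disintegration along the \emph{curved} unstable lamination must be shown to be measurable; and the absolute continuity of the stable holonomy must be invoked so that the projection identity $(p^s)_*\mu=\mu_g$ genuinely transfers the absolute continuity of $\mu_g$ to the conditionals of $\mu$, while simultaneously ensuring that $\mu$ does not see the discontinuities of $F$.
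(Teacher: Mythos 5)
Your route is genuinely different from the paper's: it is the Krylov--Bogolyubov/Gibbs-u-state scheme (reference measure $\nu_0$ with absolutely continuous unstable conditionals, Ces\`aro averages, weak-$*$ limit, persistence of the conditional structure in the limit). The difficulty is that the decisive step --- that absolute continuity of the unstable conditionals survives the weak-$*$ limit --- is exactly the step you defer and never carry out, and the mechanism you propose for it does not work as stated. The projection identity $(p^s)_*\mu=\mu_g$ constrains only the \emph{average} over leaves of the projected conditional measures, not each conditional individually: for instance, $\mu'=\int \delta_{(x,h(x))}\,d\mu_g(x)$, with $h$ an arbitrary measurable height function, satisfies $(p^s)_*\mu'=\mu_g$ while its transverse conditionals are purely atomic. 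So absolute continuity of $\mu_g$ together with the projection identity cannot, by itself, be ``transferred'' to the leaf conditionals of the limit; one needs a genuine Pesin--Sinai-type argument giving uniform two-sided density bounds for the conditionals of each $F^k_*\nu_0$ and showing these bounds pass to the limit, which is precisely where the nonclassical features bite (unstable curves are cut by the discontinuity set, there are countably many branches, and mass can accumulate on $\bigcup_i\partial S_i$). There is also a near-circularity in your logical order: invariance of the weak-$*$ limit requires $\mu(\bigcup_i\partial S_i)=0$, which you deduce from the absolutely continuous conditionals --- that is, from the step that is missing.

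The paper sidesteps all of this by avoiding compactness and limit measures altogether. It places $\mu_g$ on the horizontal segment $[0,1]\times\{0\}$ and shows directly that, for every continuous $\psi$, the sequence $\int(\overline{\psi\circ F^n})\,d\mu_g$ is Cauchy: the key computation combines the semiconjugacy $p^s\circ F=g\circ p^s$, the $g$-invariance of $\mu_g$, and the uniform contraction along stable manifolds, which makes $F^{n+k}(x,0)$ and $F^n\circ p^s\circ F^k(x,0)$ uniformly close for large $n$. The limit is then a positive normalized linear functional, hence a measure $\mu_F$ by the Riesz representation theorem; invariance is immediate from shifting the index $n$, and the unstable conditionals are pullbacks of $\mu_g$ under the stable holonomy by construction, since each approximating measure is literally a copy of $\mu_g$ carried on the curves of $F^n([0,1]\times\{0\})$. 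If you want to salvage your plan, the cleanest repair is to abandon the Ces\`aro/weak-$*$ step and run this Cauchy-sequence argument instead; short of that, you would have to supply the full Gibbs-u-state machinery adapted to a discontinuous map with countably many branches, which is a substantial piece of work that your outline only names as an obstacle.
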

\begin{proof}
For a given continuous function $\psi:S\to \mathbb{R}$, let $\overline{\psi}:[0,1]\to\mathbb{R}$ defined by $\overline{\psi}(x)=\psi(x,0)$. Then we claim that $$\lim_{n\rightarrow\infty}\int(\overline{\psi\circ F^n})d\mu_g$$ exists. For given $\epsilon>0$, there exists $\delta>0$ such that for any $z_1$ and $z_2$ in $S$ satisfying $|z_1-z_2|<\delta$, we have $|\psi(z_1)-\psi(z_2)|<\epsilon$. Since $F$ is contracting along the stable manifolds, there exists $n_0\geq0$ such that for any $n+k\geq n\geq n_0$ we have $|F^{n+k}(x,0)-F^n\circ p^s\circ F^k(x,0)|<\delta$. Therefore
\begin{align*}
\left|\int(\overline{\psi\circ F^{n+k}})d\mu_g \hspace{-.1cm}-\hspace{-.1cm}\int(\overline{\psi\circ F^n})d\mu_g \right| &=\left|\int(\psi\circ F^{n+k})(x,0)d\mu_g \hspace{-.1cm}-\hspace{-.1cm}\int(\overline{\psi\circ F^n})\circ g^k d\mu_g \right|\\
&=\left|\int(\psi\circ F^{n+k})(x,0)d\mu_g\hspace{-.1cm}-\hspace{-.1cm}\hspace{-.1cm}\int(\psi\circ F^n\circ p^s\circ F^k)(x,0) d\mu_g \right|\\
&\leq\epsilon.
\end{align*}
The first equality holds by the $g$-invariance of $\mu_g$. So we have shown that $\{\int(\overline{\psi\circ F^n})d\mu_g\}$ is a Cauchy sequence in $\mathbb{R}$ and it converges. Define
$$\hat{\mu}(\psi)=\lim_{n\rightarrow\infty}\int(\overline{\psi\circ F^n})d\mu_g.$$
Obviously $\hat{\mu}$ is a linear operator on the space of continuous functions $\psi:S\to \mathbb{R}$. Also $\hat{\mu}(1)=1$ and $\hat{\mu}$ is non-negative that is $\hat{\mu}(\psi)\geq0$ for $\psi\geq0$. So, by Riesz representation theorem, there exists a unique measure called $\mu_F$ such that for any continuous map $\psi$
$$\hat{\mu}(\psi)=\int \psi d\mu_F.$$
By the definition of $\hat{\mu}$, $\mu_F$ is $F$-invariant and the disintegration of $\mu_F$ along any unstable manifold $W^u$ is the pullback of $\mu_g$ by the holonomy map $H_{W^u}:W^u\to [0,1]$.
\end{proof}
\section{Absolute Continuity of the SRB Measure}
Our approach for proving the absolute continuity of $\mu_F$ is based on two \textit{general assumptions} of transversality and fatness which are appeared in a more specific way in \cite{R, T}. In this context, some more \textit{special assumptions} are needed for the simplicity of calculations. Recall that $F_i(x,y)=(F_{i1}(x,y),F_{i2}(x,y))$. Put
$$|D^2F_i(z)|:=\max_{j=1,2,(k,l)=(x,x),(x,y),(y,y)}\{|F_{ijkl}(z)|\}.$$
We assume that
\begin{enumerate}[label=\textbf{A\arabic*}]
\item \label{A1}there exists a constant $C_0>0$ such that $\sup_{i\geq1}\sup_{z\in S_i} |D^2F_i(z)|<C_0$,
\item \label{A2}$J_{F_i}:=F_{i1x}F_{i2y}-F_{i1y}F_{i2x}$ and $\sup_{i\geq1}J_{F_i}<\infty$,
\item \label{A3}$\sup_{i\geq1}\sup_{z\in S_i, F_i(w)=z} (F_{i1y}(z)F_{i2x}(w))/F_{i2y}(z)<\infty$,
\item \label{A4}$|F_{i1y}(z)|,|F_{i2x}(z)|<1/8$ for any $z\in\tilde{S}$ and $i\geq1$.
\end{enumerate}
According to \cite[Lemma 4.2]{JN} and using \ref{A1}, there exists a positive constant $C_1$ such that for any two close points $z$ and $w$ lying on an unstable piece in $S_i$,
\begin{equation}\label{8}
\frac{|F_{i1x}(z)|}{|F_{i1x}(w)|}\leq \exp(C_1),
\end{equation}
where $C_1=\sqrt{2}(1+\alpha) C_0$.
Theorem \ref{T1} will be proven by using these general and special assumptions.
\subsection{Fatness and Transversality Conditions}
Let $J$ be an interval strictly containing $I=[0,1]$ and $\hat{S}=[0,1]\times J$. Suppose that $\hat{S}_i$ and $\hat{U}_i$ are the neighborhoods of $S_i$ and $U_i$ in $\hat{S}$ respectively such that each $F_i$ can be extended to a $C^2$ hyperbolic diffeomorphism $\hat{F}_i:\hat{S}_i\to\hat{U}_i$ which has the same properties as $F_i$ and $p^s(\hat{S}_i)=I_i$. For any word $\A\in \Sigma_\mathbb{N}^\infty$, define $\hat{U}_{[\A]_n}$ in a similar way as $U_{[\A]_n}$. Denote the intersection of $U_{[\A]_n}$ with the stable manifold $W^s(x)$ through the point $(x,0)$ by $U_{[\A]_n}(x)$. Define the notation $\hat{U}_{[\A]_n}(x)$ in a similar way. Let $d([\A]_n)=\max_{x}|\hat{U}_{[\A]_n}(x)|$.
\begin{definition}
The generalized horseshoe $F$ is called fat if there exist $K_1,\epsilon>0$ such that
$$ |I_{[\A]_n}|\leq K_1(d([\A]_n))^{1+\epsilon},$$
holds for all $[\A]_n\in\Sigma_\mathbb{N}^n$, where $I_{[\A]_n}=p^s(S_{[\A]_n})$.
\end{definition}

To define the transversality condition, we need a bit of notation. For any $x\in\bigcup I_i$ and $\A\in\Sigma_\mathbb{N}^\infty$, put $W^u_{\A}(x):=W^u_{\A}\cap W^s(x)$. For $\delta>0$, two words $\A,\B\in\Sigma_\mathbb{N}^\infty$ are $\delta$-\textit{transversal} if
$$d_s(W^u_{\A}(x),W^u_{\B}(x))>\delta\quad \text{or}\quad \left|\frac{d}{dx}W^u_{\A}(x)-\frac{d}{dx}W^u_{\B}(x)\right|>\delta$$
holds for almost all $x$, where $d_s$ is the metric induced by the Riemannian metric on the stable manifold. Two finite words $[\A]_n$ and $[\B]_m$ are $\delta$-transversal if for any  $\U,\V\in\Sigma_\mathbb{N}^\infty$, the two infinite words $[\A]_n\U$ and $[\B]_m\V$ are $\delta$-transversal. Note that for any two $\delta$-transversal finite words $[\A]_n$ and $[\B]_m$, if $a_1\neq b_1$ then
\begin{equation}\label{vol}
\mathrm{vol}(\hat{U}_{[\A]_n}\cap \hat{U}_{[\B]_m})\leq \delta^{-1}d([\A]_n)d([\B]_m).
\end{equation}

For any $r<|J|$ and $\A\in \Sigma_\mathbb{N}^\infty$, let $n$ be the biggest number satisfying $d([\A]_n)\geq r$. Let $M(r)$ be the set of all such finite words $[\A]_n$. Put $${\bf M}_\delta^{Tr}(r):=\{([\A]_n,[\B]_m)\in M(r)^2; \; [\A]_n \; \text{and} \; [\B]_m \; \text{are}\; \delta \; \text{-transversal}\}$$
and ${\bf M}_\delta^{NTr}(r)$ be the complement of ${\bf M}_\delta^{Tr}(r)$.
\begin{definition}\label{4.2}
The map $F$ satisfies the transversality condition if for some $\delta>0$
$$\limsup_{r\to0}r^{-2}\sum_{([\A]_n,[\B]_m)\in {\bf M}_\delta^{NTr}(r)}\mathrm{vol}\;(\hat{U}_{[\A]_n}\cap\hat{U}_{[\B]_m}) |I_{[\A]_n}| |I_{[\B]_m}|<\infty.$$
\end{definition}
\begin{example}
\normalfont
Let $I_1=[0,1/2]$ and $I_2=[1/2,1]$ and for $1/2<b<a<1$, consider piecewise affine map $F=(f_1,f_2)$ on $[0,1]^2$ with
\begin{align*}
f_1(x,y)&=(2x,(a+2x(b-a)y+(1-a)2x(a-b)),\,\,\text{for} (x,y)\in I_1\times[0,1],\\
f_2(x,y)&=(2x-1,(a+(2x-1)(b-a))y),\hspace{1.5cm}\text{for}(x,y)\in I_2\times[0,1],
\end{align*}
see Figure  \ref{affine} below.
\begin{figure}[ht]
\centering
{\includegraphics[width=5cm]{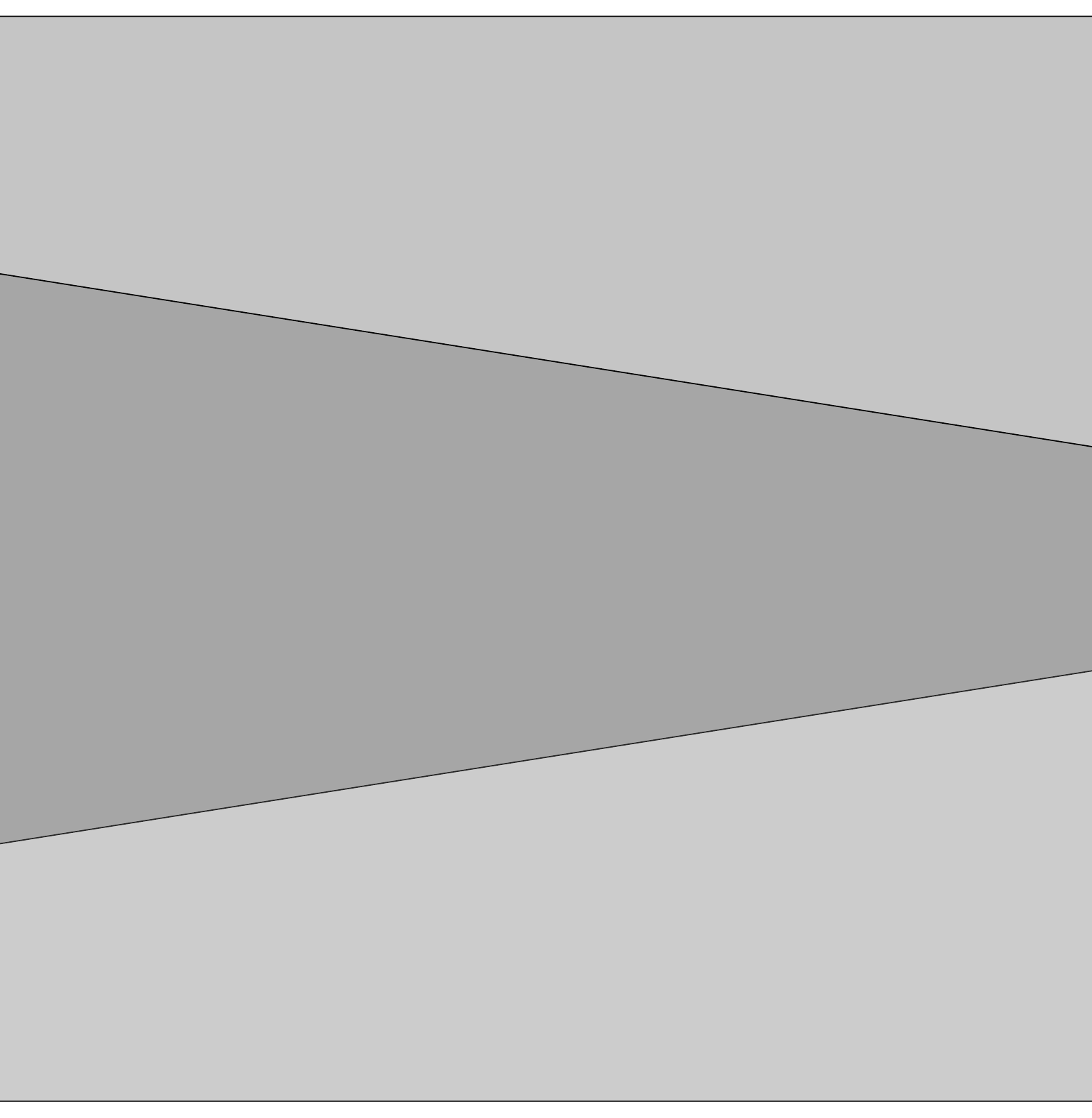}}
\qquad
{\includegraphics[width=5cm]{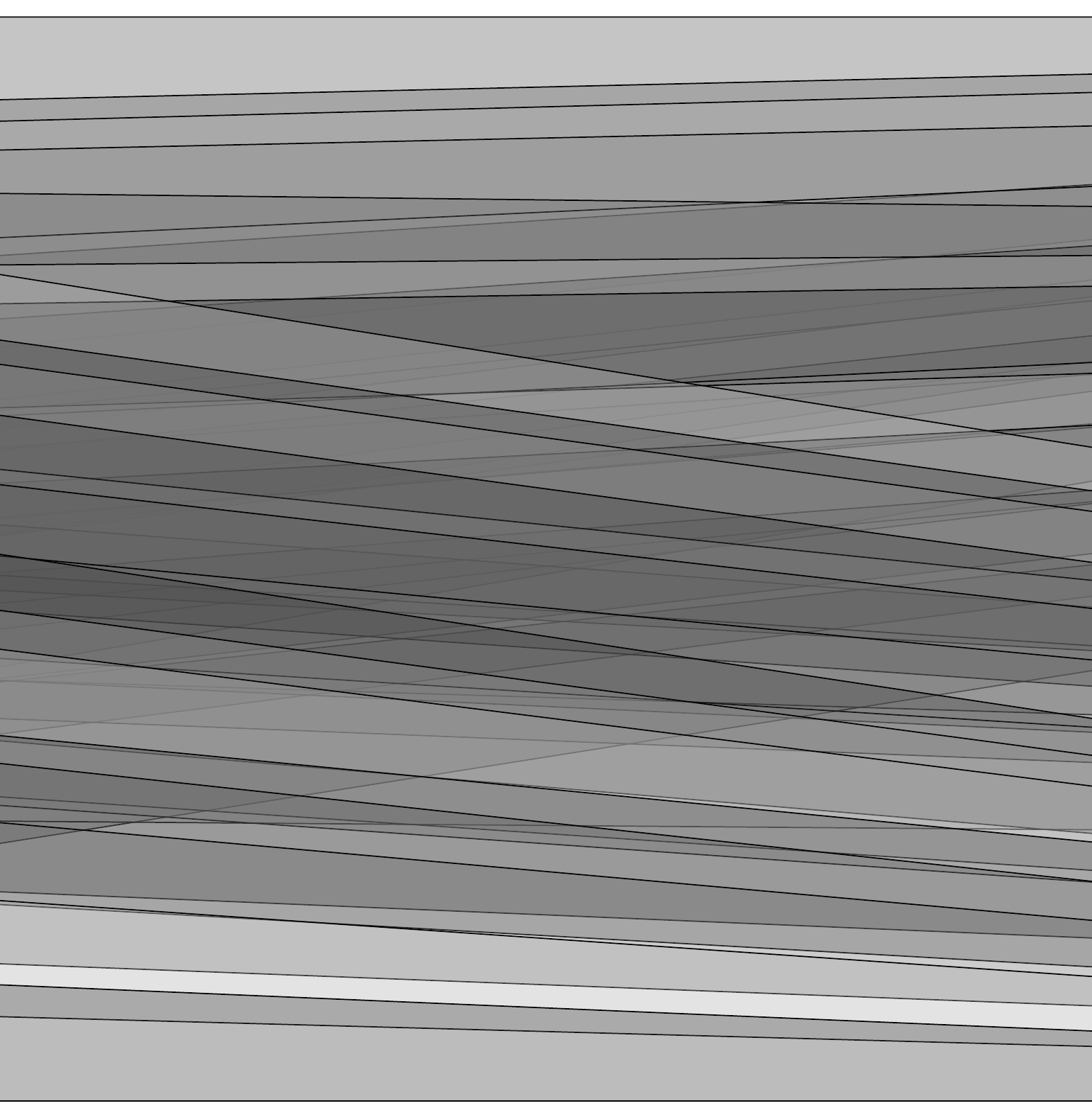}}
\caption{Image of $F^n$ for $a=0.8$ and $b=0.55$. Left, for $n=1$ and right for $n=5$.}\label{affine}
\end{figure}\\
Put 
$$\gamma:=\inf_{f_1(z)=f_2(w)}\angle(D_xf_1(\mathcal{C}^u_\alpha(z)),D_yf_2(\mathcal{C}^u_\alpha(w)))>\frac{a-b}{2}.$$
Choose $m_0\in\mathbb{N}$ such that $(a/2)^{m_0}(\gamma)\sim\delta$. 
By the construction, for $[\A]_n\in M(r)$, $n\sim \mathcal{O}(\log r/\log a)$ and 
$d([\A]_m)\leq \dfrac{r}{b^{n-m}}$. By the construction, if $([\A]_n,[\B]_n)\in {\bf M}^{NTr}_\delta(r)$ then $a_j=b_j$, for $j=m_0,\ldots,n$. For $m\leq m_0$, put
$${\bf M}^{NTr(m)}_\delta(r)=\{([\A]_n,[\B]_n)\in {\bf M}^{NTr}_\delta(r):a_j=b_j,\; j=m,\ldots,n\}.$$
 By \eqref{vol}, for $([\A]_n,[\B]_n)\in {\bf M}^{NTr(m)}_\delta(r)$ we have $\mathrm{vol}(U_{[\A]_m}\cap U_{[\B]_m})\leq \delta^{-1} (\dfrac{r}{b^{n-m}})^2$ and hence
\begin{align*}
\mathrm{vol}(U_{[\A]_n} \cap U_{[\B]_n})&\leq \delta^{-1} (\dfrac{r}{b^{n-m}})^22^{-(n-m)}\max (J_{f_1^{n-m}},J_{f_2^{n-m}})\\
&\leq \delta^{-1} (\dfrac{r}{b^{n-m}})^22^{-(n-m)}(2a)^{n-m} =\delta^{-1} r^2(\dfrac{a}{b^2})^{n-m}.
\end{align*}
Then,
\begin{align*}
r^{-2}&\sum_{([\A]_n,[\B]_n)\in {\bf M}_\delta^{NTr}(r)}\mathrm{vol}\;({U}_{[\A]_n}\cap{U}_{[\B]_n}) |I_{[\A]_n}| |I_{[\B]_n}|\\
&=r^{-2}\sum_{m=1}^{m_0}\sum_{([\A]_n,[\B]_n)\in {\bf M}_\delta^{NTr(m)}(r)}\mathrm{vol}\;({U}_{[\A]_n}\cap{U}_{[\B]_n}) |I_{[\A]_n}| |I_{[\B]_n}|\\
&\leq \delta^{-1} 4^{-n}\sum_{m=1}^{m_0}(\frac{a}{b^2})^{n-m}\\
&\leq \delta^{-1} r^{(\log \dfrac{a}{4b^2})/(\log a)},
\end{align*}
which converges.
\end{example}
\subsection{Control of Distortions}
In the following, we describe the distortion properties of the GHM through some propositions and lemmas. The easy case is the control of distortion along the stable manifolds comes from classical distortion control.
\begin{proposition}(Bounded Distortion Property Along Stable Manifolds)\label{R1}
There exists $K>0$ such that for any $[\A]_n$ and any $z,w\in S$ belonging to a small stable piece in $U_{[\A]_n}$,
$$K^{-1}<\frac{|D_z^sF_{[\A]_i}^{-1}|}{|D_w^sF_{[\A]_i}^{-1}|}<K \quad  \text{for any}\; i=1,\ldots,n,$$
where $D^s$ is the derivative along the stable manifolds.
\end{proposition}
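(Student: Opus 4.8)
The plan is to reduce the claim to a telescoping sum over the individual inverse branches and then exhibit a geometric series. First I would make the evaluation points explicit: for $i\le n$ the branch $F_{[\A]_i}^{-1}=F_{a_1}^{-1}\circ\cdots\circ F_{a_i}^{-1}$ is applied not at $z,w$ themselves (which lie in $U_{[\A]_n}\subseteq U_{a_n}$, and in general not in $U_{[\A]_i}$) but at their level-$i$ representatives $z^{(i)},w^{(i)}\in U_{[\A]_i}$, obtained by pulling $z,w$ back under $F_{a_{i+1}}^{-1}\circ\cdots\circ F_{a_n}^{-1}$. Because the stable lamination is $F^{-1}$-invariant, $z^{(i)}$ and $w^{(i)}$ still lie on a common stable leaf, and the same holds for every intermediate point $z^{(j)}=F_{a_{j+1}}^{-1}\circ\cdots\circ F_{a_i}^{-1}(z^{(i)})$ and its counterpart $w^{(j)}$. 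The chain rule along the stable direction then gives
\[
\log\frac{|D^s_{z^{(i)}}F_{[\A]_i}^{-1}|}{|D^s_{w^{(i)}}F_{[\A]_i}^{-1}|}=\sum_{j=1}^{i}\Big(\log|D^s_{z^{(j)}}F_{a_j}^{-1}|-\log|D^s_{w^{(j)}}F_{a_j}^{-1}|\Big),
\]
so it suffices to bound this sum uniformly in $i$, $n$ and $\A$.

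For the single-step estimate I would show that $\log|D^sF_a^{-1}|$ is Lipschitz along stable leaves with a constant $L$ independent of the branch $a$. This is the stable counterpart of \eqref{8}: each $F_a$ is $C^2$ with $|D^2F_a|\le C_0$ by \ref{A1}, expands the stable direction by at least $K_0$ by \ref{H2}, and the stable leaves are $C^1$ curves of uniformly bounded geometry, so the argument of \cite[Lemma 4.2]{JN} applies to the derivative taken along the (curved) leaf. Consequently each summand is at most $L\,d_s(z^{(j)},w^{(j)})$, and therefore
\[
\Big|\log\frac{|D^s_{z^{(i)}}F_{[\A]_i}^{-1}|}{|D^s_{w^{(i)}}F_{[\A]_i}^{-1}|}\Big|\le L\sum_{j=1}^{i}d_s(z^{(j)},w^{(j)}).
\]

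It then remains to sum the stable distances. Since $z^{(j)}$ and $w^{(j)}$ lie on a common stable leaf contained in $U_{[\A]_j}\subseteq\hat{U}_{[\A]_j}$, we have $d_s(z^{(j)},w^{(j)})\le d([\A]_j)$. By \ref{H2} every forward application of a branch contracts lengths along stable leaves by a factor at most $K_0^{-1}$, whence $d([\A]_{j+1})\le K_0^{-1}d([\A]_j)$ and $d([\A]_j)\le K_0^{-(j-1)}|J|$. Thus $\sum_{j=1}^{i}d_s(z^{(j)},w^{(j)})\le\sum_{j\ge1}K_0^{-(j-1)}|J|=\frac{K_0}{K_0-1}|J|$, a bound independent of $\A$, $i$ and $n$; taking $K=\exp\!\big(LK_0|J|/(K_0-1)\big)$ finishes the proof.

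The hard part is the uniform single-step Lipschitz estimate. Because the stable leaves are genuinely curved and the field $E^s$ varies from point to point, one cannot simply differentiate a coordinate component: $|D^sF_a^{-1}|$ is the derivative taken along the leaf, and controlling its logarithmic variation forces one to combine the second-derivative bound \ref{A1} with the slope estimates \eqref{5}--\eqref{7} and the lower bound \ref{H2}, uniformly over the infinitely many branches. I would want to verify that these constants really are uniform in $a$ before declaring \eqref{8} transplantable. By contrast, the geometric decay of $d([\A]_j)$ follows from stable contraction alone and is independent of the distortion bound, so there is no circularity in the argument.
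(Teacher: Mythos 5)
Your skeleton is the right one, and in fact it is all the paper itself has: Proposition \ref{R1} is stated with no proof, merely attributed to ``classical distortion control''. Your reconstruction --- correcting the evaluation points (as written, the statement applies $F^{-1}_{[\A]_i}$ at points where it is not defined, and your level-$i$ representatives repair this), telescoping $\log|D^sF^{-1}_{[\A]_i}|$ over the individual branches, and summing the stable distances via $d_s(z^{(j)},w^{(j)})\le d([\A]_j)\le K_0^{-(j-1)}|J|$ --- is a faithful rendering of that classical argument, and those two steps are correct.

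The genuine gap is exactly the step you flag and then assume: a Lipschitz bound for $\log|D^sF_a^{-1}|$ along stable leaves that is \emph{uniform over the countably many branches} $a$. This does not follow from \ref{A1}, \ref{H2} and \eqref{5}--\eqref{7}, and the analogy with \eqref{8} is deceptive: the proof of \eqref{8} divides the second-derivative bound $C_0$ by $|F_{i1x}|\ge K_0>1$, which is bounded below uniformly in $i$, whereas the stable analogue must divide by $|D^sF_i|$, which has no uniform positive lower bound (the image strips $U_i$ are arbitrarily thin). In fact the estimate you need --- and the Proposition itself --- fails for generalized horseshoes satisfying every explicitly stated hypothesis. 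Consider skew products $F_i(x,y)=(g_i(x),f_i(y))$ on $S_i=I_i\times[0,1]$, with $g_i$ affine onto $[0,1]$, $f_1(y)=\frac{|I_1|}{2}\,y$, and $f_i(y)=|I_i|(\eta_i y+\epsilon_0 y^2)$ for $i\ge2$, where $\epsilon_0>0$ is fixed and $\eta_i\downarrow0$. The cone conditions, \ref{H2}, \ref{A1} (here $|D^2F_i|=2|I_i|\epsilon_0$), \ref{A2} (here $J_{F_i}=\eta_i+2\epsilon_0y$), \ref{A3}, \ref{A4} and \eqref{5}--\eqref{7} all hold with uniform constants, and the stable leaves are straight vertical lines; yet for the word $[\A]_2=(1,i)$, taking $z,w\in U_{[\A]_2}$ to be the $F_{[\A]_2}$-images of $(x_0,1)$ and $(x_0,0)$ --- two points of one stable leaf inside $U_{[\A]_2}$, a stable piece of length $O(|I_1|\,|I_i|)$ --- one gets
\[
\frac{|D^s_{z}F^{-1}_{[\A]_2}|}{|D^s_{w}F^{-1}_{[\A]_2}|}
=\frac{f_i'(0)}{f_i'(f_1(1))}
=\frac{\eta_i}{\eta_i+\epsilon_0|I_1|}\longrightarrow 0
\qquad(i\to\infty),
\]
so no uniform $K$ exists. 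So your single-step claim is not merely unverified; it is unprovable from the ingredients you allow yourself. The missing ingredient is a stable-side Renyi--Adler type condition, e.g.\ $\sup_i\sup_{S_i}|D^2F_i|/|D^sF_i|<\infty$ (equal to $2\epsilon_0/\eta_i$ in the example above), which is precisely the kind of distortion hypothesis contained in ``the hyperbolicity conditions described in \cite{JN}'' that the paper inherits wholesale and presumably means by ``classical''; the same package is what yields the uniform curvature bound on stable leaves that your Lipschitz claim also silently uses. With such a condition substituted for \ref{A1} in your single-step estimate, the rest of your argument closes and proves the Proposition.
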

The next proposition is actually Lemma 2.2 in \cite{R}. However, the two generalizations on the model, the infinity of the strips and the non-zero curvature of stable manifolds force more delicate details to prove.
\begin{proposition}\label{pro4}
Suppose that $[\A]_n$ is an arbitrary finite word. Then there exists a constant $K_2$ such that for almost all $x_1$ and $x_2$ in $[0,1]$ we have
$$K_2^{-1}<\frac{|U_{[\A]_n}(x_1)|}{|U_{[\A]_n}(x_2)|}<K_2$$
and similarly for $\hat{U}_{[\A]_n}$. This constant is independent of the choice of $[\A]_n$.
\end{proposition}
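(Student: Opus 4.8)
The plan is to reduce the ratio of fibre lengths to a distortion estimate for the stable contraction of $F_{[\A]_n}$ measured \emph{transverse} to the stable foliation, and then to establish that estimate by a telescoping chain-rule argument whose error terms decay geometrically. Fix an admissible $x$ and let $\gamma_x\subset S_{[\A]_n}$ be the full-height stable curve $F_{[\A]_n}^{-1}(U_{[\A]_n}(x))$. Since $\gamma_x$ is the graph of a $C^1$ function with slope in $\mathcal{C}^s_\alpha$ running between $y=0$ and $y=1$, its length satisfies $1\le|\gamma_x|\le\sqrt{1+\alpha^2}$, uniformly in $x$ and in $[\A]_n$. Writing this length as a line integral along $U_{[\A]_n}(x)$,
\[
|\gamma_x|=\int_{U_{[\A]_n}(x)}|D^s_zF_{[\A]_n}^{-1}|\,ds(z),
\]
Proposition \ref{R1} shows that the integrand varies by at most a factor $K$ along $U_{[\A]_n}(x)$, so for a representative point $p_x\in U_{[\A]_n}(x)$ we get $|U_{[\A]_n}(x)|\asymp |D^s_{p_x}F_{[\A]_n}^{-1}|^{-1}$, with constants depending only on $K$ and $\alpha$. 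Hence
\[
\frac{|U_{[\A]_n}(x_1)|}{|U_{[\A]_n}(x_2)|}\asymp\frac{|D^s_{p_{x_2}}F_{[\A]_n}^{-1}|}{|D^s_{p_{x_1}}F_{[\A]_n}^{-1}|},
\]
and it remains to bound this transverse ratio independently of $[\A]_n$.

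For the transverse ratio I would factor $F_{[\A]_n}^{-1}=F_{a_1}^{-1}\circ\cdots\circ F_{a_n}^{-1}$ and write $\log|D^s_{p_{x_i}}F_{[\A]_n}^{-1}|=\sum_{k}\log|D^s F_{a_{n-k}}^{-1}(p_{x_i}^{(k)})|$, where $p_{x_i}^{(k)}$ is the $k$-th backward iterate of $p_{x_i}$. The essential geometric point is that, at every intermediate level $j=n-k$, both iterates lie in the common set $U_{[a_1\ldots a_j]}\cap S_{[a_{j+1}\ldots a_n]}$; therefore their vertical separation is at most its stable thickness $d([a_1\ldots a_j])$ and their horizontal separation is at most $|I_{[a_{j+1}\ldots a_n]}|$. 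Both quantities decay geometrically at rate $K_0^{-1}$ by the expansion/contraction in \ref{H2}, so
\[
\sum_{j}|p_{x_1}^{(j)}-p_{x_2}^{(j)}|\le\sum_{j}\bigl(d([a_1\ldots a_j])+|I_{[a_{j+1}\ldots a_n]}|\bigr)\le\frac{\mathrm{const}}{1-K_0^{-1}},
\]
uniformly in $n$ and in the word. Combining this with a uniform Lipschitz bound for $\log|D^sF_{a}^{-1}|$, which follows from the $C^2$-bound \ref{A1} together with the hyperbolic lower bound on the contraction, gives $\bigl|\log|D^s_{p_{x_1}}F_{[\A]_n}^{-1}|-\log|D^s_{p_{x_2}}F_{[\A]_n}^{-1}|\bigr|\le C'$, and hence the desired bound with $K_2=\exp(C')\cdot\mathrm{const}(K,\alpha)$, independent of $[\A]_n$.

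The hard part is the uniform Lipschitz control of $\log|D^sF_a^{-1}|$: unlike the skew-product situation of \cite{R,T}, where the stable foliation is the vertical foliation and $D^sF_a^{-1}$ is literally the partial derivative in $y$, here the stable direction $E^s(z)$ is a curved field, so $|D^s_zF_a^{-1}|$ depends on the varying direction $E^s(z)$ as well as on $z$. To control its transverse variation I would use the cone estimates \eqref{5}--\eqref{7} to express $|D^s_zF_a^{-1}|$ as the $y$-contraction up to a factor governed by $\alpha$, and then bound the transverse variation of that factor by the uniform Lipschitz regularity of the stable foliation furnished by \ref{A1} and the Hadamard--Perron theorem; this is precisely the step that the non-zero curvature of the stable leaves renders non-trivial. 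Finally, since the extended maps $\hat F_i$ enjoy the same hyperbolicity and $C^2$ bounds as the $F_i$, the identical argument applied to $\hat F_{[\A]_n}$ yields the statement for $\hat U_{[\A]_n}$ with the same constant.
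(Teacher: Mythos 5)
Your first half is exactly the paper's reduction: using that $F_{[\A]_n}^{-1}(U_{[\A]_n}(x))$ is a full-height stable curve of length comparable to $1$, together with Proposition \ref{R1}, both you and the paper reduce the claim to a \emph{transverse} distortion estimate, namely that $|D^s_{z}F_{[\A]_n}^{-1}|/|D^s_{w}F_{[\A]_n}^{-1}|$ is uniformly bounded for $z,w$ on different stable fibres; and your telescoping with geometrically summable separations of the backward iterates is also the paper's first error term (handled there by the mean value theorem and $|z_i-w_i|\leq \const K_0^{-i}$). The genuine gap is in what you yourself flag as ``the hard part''. You propose to control the direction-dependence of $|D^s F_a^{-1}|$ by invoking ``uniform Lipschitz regularity of the stable foliation furnished by \ref{A1} and the Hadamard--Perron theorem''. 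No such black box exists in this setting, and the statement is in fact false as a pointwise regularity claim: $E^s(z)$ is determined by the \emph{entire forward itinerary} of $z$, and since there are countably many strips, points that are arbitrarily close spatially can have itineraries that diverge after one step, so the stable direction field is not Lipschitz --- it is not even continuous on $\tilde{S}$. Hadamard--Perron gives $C^1$ smoothness of individual leaves, not transverse Lipschitz (or even H\"older) regularity of the field, so your per-step Lipschitz bound for $\log|D^sF_a^{-1}|$ in terms of $|p^{(j)}_{x_1}-p^{(j)}_{x_2}|$ cannot be established this way.

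What actually makes the direction terms summable is dynamical, not regularity-theoretic: the two matched backward iterates $p^{(j)}_{x_1},p^{(j)}_{x_2}$ share the first $j$ symbols of their forward itineraries, so their stable directions both lie in a cone pulled back $j$ times, and the angle between them decays geometrically by cone contraction --- even at steps where the points are \emph{not} spatially close (for $j$ near $n$ their vertical separation is of order $1$, yet the angle is tiny). Supplying this is precisely the content of the paper's adapted-coordinate construction and Lemma \ref{4.1}: the uniform bounds on $\tilde{g}_{1x}$, $\tilde{g}_{2y}$, $\tilde{g}_{2x}/\tilde{g}_{2y}$ (which require \ref{A1}--\ref{A4} for uniformity over the countably many branches, e.g.\ the bound on $H(z)$ uses \ref{A3} and \ref{A4}), the contraction rate \eqref{12}, and the fact that in the adapted coordinates $\tilde{g}_{1y}(z_{i-1})=0$, which converts the base-point variation into an error of size $\const|w_{i-1}-z_{i-1}|$. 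These feed the inductive recursion
\begin{equation*}
|u_{1,i}|\leq \frac{1}{(1-C_4^2)K_0^2}\,|u_{1,i-1}|+\tilde{C}\left(\frac{1}{K_0}\right)^{i-1},
\end{equation*}
whose solution decays geometrically and makes the direction sum converge. Your outline correctly identifies where the difficulty sits, but replaces the entire mechanism by an appeal to a regularity property that fails here; until that step is replaced by a cone-contraction argument of this kind, the proof is incomplete.
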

Proposition \ref{pro4} and the following discussion are the key ingredients needed to prove our main goal. For the proof of Proposition \ref{pro4}, we need more subtle details. First, we change the coordinate from the standard one to the coordinates induced by splitting $E^u_z\oplus E^s_z$.

To simplify, abuse of notation, denote diffeomorphism $F_{a_i}:S_{a_i}\to U_{a_i}$, for $i=1,\ldots,n$ by $F$. For $z\in \tilde{S}$, and the unstable curve $\gamma$ through $z$, consider splitting $\mathbb{R}^2=E^u_z\oplus E^s_z$, where $E^u_z$ contains the unit vector tangent to $\gamma$ at $z$ and $E^s_z$ is tangent to the local stable manifold through $z$. For $v\in E^u_z\oplus E^s_z$, let $|v|=|v|_z$ be the max norm which is defined before. Let $A_z$ be an affine automorphism of $\mathbb{R}^2$ such that
\begin{itemize}
\item $A_z(z)=z$
\item $D_zA\begin{bmatrix}
1 \\
0
\end{bmatrix}=\begin{bmatrix}
1 \\
a(z)
\end{bmatrix}\in E^u_z$
\item $D_zA\begin{bmatrix}
0 \\
1
\end{bmatrix}=\begin{bmatrix}
b(z) \\
1
\end{bmatrix}\in E^s_z.$
\end{itemize}
Note that $|a(z)|,|b(z)|\leq \alpha$ and in this case $a(z)=\alpha(z) F_{2x}(z)$ and $b(z)=\beta(z) F_{1y}(z)$, where $|\alpha(z)|,|\beta(z)|\leq1$. Let $\tilde{F}^{-1}$ be the local representation of $F^{-1}$ in this coordinate which means $\tilde{F}^{-1}=A_{F^{-1}(z)}^{-1}F^{-1}A_z$. Then, the matrix $D\tilde{F}^{-1}(z)$ is diagonal. Let
$$D\tilde{F}^{-1}=\begin{bmatrix}
\tilde{g}_{1x} &\tilde{g}_{1y}\\
\tilde{g}_{2x} & \tilde{g}_{2y}
\end{bmatrix}=A_{F^{-1}(z)}^{-1}DF^{-1}A_z.$$
So, for any $w\in U_{a_i}$, one gets
\begin{itemize}
\item
$J_FJ_{A_{F^{-1}(z)}}\tilde{g}_{1x}(w)=F_{2y}+b({F^{-1}(z)})F_{2x}-a(z)F_{1y}-a(z)b({F^{-1}(z)})F_{1x}$,
\item
$J_FJ_{A_{F^{-1}(z)}}\tilde{g}_{1y}(w)=b(z)F_{2y}+b(z)b({F^{-1}(z)})F_{2x}-F_{1y}-b({F^{-1}(z)})F_{1x}$,
\item
$J_FJ_{A_{F^{-1}(z)}}\tilde{g}_{2x}(w)=-a({F^{-1}(z)})F_{2y}-F_{2x}+a(z)a({F^{-1}(z)})F_{1y}+a(z)F_{1x}$,
\item
$J_FJ_{A_{F^{-1}(z)}}\tilde{g}_{2y}(w)=-a({F^{-1}(z)})b(z)F_{2y}-b(z)F_{2x}+a({F^{-1}(z)})F_{1y}+F_{1x}$,
\end{itemize}
where $J_{A_{F^{-1}(z)}}=1-a({F^{-1}(z)})b({F^{-1}(z)})$ and the partial derivatives of $F_1$ and $F_2$ are evaluated at $A_z(w)$.
\begin{lemma}\label{4.1}
Under the notations above, there exist positive constants $C_2$, $C_3$ and $C_4$ such that for any point $w$ close to $z$ lying on the same unstable curve, we have
\begin{equation}\label{9}
|\tilde{g}_{1x}(z)|\leq C_2,
\end{equation}
\begin{equation}\label{10}
|\tilde{g}_{2y}(w)|\geq C_3,
\end{equation}
\begin{equation}\label{11}
\dfrac{|\tilde{g}_{2x}(w)|}{|\tilde{g}_{2y}(w)|}\leq C_4.
\end{equation}
These constants are independent of the choice of $z$ and $w$. Also,
\begin{equation}\label{12}
\dfrac{|\tilde{g}_{1x}(w)|}{|\tilde{g}_{2y}(w)|}\leq \frac{1}{K_0^2}.
\end{equation}
\end{lemma}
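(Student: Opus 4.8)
The plan is to read all four inequalities off the four displayed formulas for $J_FJ_{A_{F^{-1}(z)}}\tilde g_{ij}$, using only the cone estimates \eqref{5}, \eqref{6}, \eqref{7}, the bounds $|a(\cdot)|,|b(\cdot)|\le\alpha$, and $1-\alpha^2\le J_{A_{F^{-1}(z)}}\le 1+\alpha^2$. Write $a_0=a(z)$, $b_0=b(z)$, $a_1=a(F^{-1}(z))$, $b_1=b(F^{-1}(z))$. The first point is that the two ratio estimates \eqref{11} and \eqref{12} are scale free: the common prefactor $J_FJ_{A_{F^{-1}(z)}}$ cancels, so that
$$\frac{\tilde g_{2x}(w)}{\tilde g_{2y}(w)}=\frac{-a_1F_{2y}-F_{2x}+a_0a_1F_{1y}+a_0F_{1x}}{F_{1x}+a_1F_{1y}-b_0F_{2x}-a_1b_0F_{2y}},\qquad \frac{\tilde g_{1x}(w)}{\tilde g_{2y}(w)}=\frac{F_{2y}+b_1F_{2x}-a_0F_{1y}-a_0b_1F_{1x}}{F_{1x}+a_1F_{1y}-b_0F_{2x}-a_1b_0F_{2y}}.$$
Dividing numerator and denominator by $F_{1x}$ and applying \eqref{5}, \eqref{6}, \eqref{7} makes every summand a bounded multiple of $1$; in particular the common denominator equals $F_{1x}$ times $1+a_1(F_{1y}/F_{1x})-b_0(F_{2x}/F_{1x})-a_1b_0(F_{2y}/F_{1x})$, whose second factor is at least $1-2\alpha^2-\alpha^2/K_0^2-\alpha^4>0$ for $\alpha$ small. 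This immediately yields \eqref{11} with an explicit $C_4$.

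For \eqref{9} and \eqref{10} I would argue at the base point, where the geometry is transparent. At $w=z$ the matrix $D\tilde F^{-1}(z)$ is diagonal, so $\tilde g_{1x}(z)$ is exactly the factor by which $DF^{-1}$ scales the unit vector spanning $E^u_z\subset\mathcal C^u_\alpha$; since $F$ expands $\mathcal C^u_\alpha$ by at least $K_0$ (condition \ref{H2}), this factor is at most $1/K_0$, which gives \eqref{9} with $C_2=1/K_0$. Symmetrically, $\tilde g_{2y}(z)$ is the factor by which $DF^{-1}$ expands the spanning vector of $E^s_z\subset\mathcal C^s_\alpha$, so $|\tilde g_{2y}(z)|\ge K_0$ by \ref{H2}. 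To promote this lower bound to the off-diagonal point $w$ in \eqref{10} I would compare $\tilde g_{2y}(w)$ to $\tilde g_{2y}(z)$ by taking the quotient of the bracketed numerators and of $J_F$ at $A_z(w)$ and at $z$; both quotients are controlled along the common unstable curve by the bounded-distortion estimate \eqref{8} (which rests on \ref{A1}), so $|\tilde g_{2y}(w)|\ge K_0\,C$ for a uniform constant $C$, and \eqref{10} follows with $C_3=K_0C$.

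The clean constant in \eqref{12} is the delicate one, and I would not try to extract it from the termwise bound, which only gives $1/K_0^2+O(\alpha^2)$. Instead, inverting the $2\times2$ matrix $D\tilde F^{-1}$ gives the pointwise identity $\tilde g_{1x}/\tilde g_{2y}=\tilde F_{2y}/\tilde F_{1x}$, where $\tilde F=A_z^{-1}FA_{F^{-1}(z)}$ is the forward map written in the splitting $E^u\oplus E^s$. In these coordinates the unstable direction is exactly the first axis, so the cone-width correction responsible for the extra $\alpha^2$ in \eqref{7} is absent: $\tilde F_{1x}$ plays the role of the unstable expansion and is $\ge K_0$, while $\tilde F_{2y}$ plays the role of the stable contraction and is $\le 1/K_0$ because $F^{-1}$ expands $\mathcal C^s_\alpha$ by at least $K_0$. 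Hence $|\tilde F_{2y}/\tilde F_{1x}|\le 1/K_0^2$, which is \eqref{12}.

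The main obstacle is precisely \eqref{12}: recovering the exact constant $1/K_0^2$ forces one to abandon the crude estimate and recognize the ratio as the straightened, cone-width-free version of \eqref{7} in the adapted frame, and then to control the discrepancy between the frame at $z$ and the true splitting at the nearby point $A_z(w)$ on the unstable curve — a discrepancy that is $O(\mathrm{dist}(z,A_z(w)))$ by the bounded curvature coming from \ref{A1}. A secondary difficulty is that the lower bound \eqref{10} cannot be read directly off the explicit formula, since that bound deteriorates as $|F_{1x}|$ grows; it must instead be transported from the base point $z$ by bounded distortion. Estimates \eqref{9} and \eqref{11} are then routine once the cancellation of $J_FJ_{A_{F^{-1}(z)}}$ and the positivity of the dominant term $F_{1x}$ in the denominator are in hand.
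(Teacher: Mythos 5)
Your arguments for \eqref{9}, \eqref{11} and \eqref{12} are essentially correct and take a genuinely different route from the paper's. For \eqref{9} the paper never argues at the diagonal point: it expands the explicit formula for $\tilde g_{1x}$ and controls the dangerous term $H(z)=a(z)b(F^{-1}(z))F_{1x}(z)/(J_FJ_{A_{F^{-1}(z)}})$ by invoking the special assumptions \ref{A3} and \ref{A4}; your observation that $D\tilde F^{-1}(z)$ is diagonal, so that \ref{H2} gives $|\tilde g_{1x}(z)|\le 1/K_0$ outright, is cleaner and bypasses \ref{A3}--\ref{A4} entirely. Your scale-free derivation of \eqref{11} is also simpler than the paper's, which instead proves the two one-sided bounds $|\tilde g_{2x}(w)|\le\bar C|F_{1x}(z)|$ and $|\tilde g_{2y}(w)|\ge K''|F_{1x}(z)|$ and divides; the only cost of your version is the smallness restriction on $\alpha$ needed to keep $1-2\alpha^2-\alpha^2/K_0^2-\alpha^4$ positive, which the paper's route avoids. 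For \eqref{12}, the cofactor identity $\tilde g_{1x}/\tilde g_{2y}=\tilde F_{2y}/\tilde F_{1x}$ is correct, and your treatment is no less rigorous than the paper's own: both establish the constant $1/K_0^2$ exactly at the base point and pass to nearby $w$ by continuity, which honestly yields only $1/K_0^2+o(1)$; this loss is harmless where the lemma is used in Proposition \ref{pro4}.

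The genuine gap is in \eqref{10}. You transport $|\tilde g_{2y}(z)|\ge K_0$ to the point $w$ by asserting that the quotient $J_F(z)/J_F(A_z(w))$ is ``controlled along the common unstable curve by the bounded-distortion estimate \eqref{8}.'' It is not. Estimate \eqref{8} controls ratios of $F_{1x}$ only, and its proof works because \ref{A1} bounds $\nabla F_{1x}$ by $C_0$ while $|F_{1x}|\ge K_0>1$, so $|\nabla\log F_{1x}|$ is uniformly bounded. No analogue holds for $J_F$: its gradient contains terms such as $F_{1x}F_{2xy}$, hence is only $O(C_0|F_{1x}|)$ with $|F_{1x}|$ unbounded over the countably many strips, and \emph{no positive lower bound on $J_F$ is assumed anywhere} (\ref{A2} is only an upper bound). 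Consequently $|\nabla\log J_F|$ need not be uniformly bounded, and the Jacobian ratio at two nearby points need not be close to $1$ uniformly in the strip index --- exactly the infinitely-many-strips uniformity problem this paper must fight at every step. Relatedly, the ``secondary difficulty'' you describe is backwards: the lower bound \emph{can} be read directly off the explicit formula, and it improves, not deteriorates, as $|F_{1x}|$ grows. This is what the paper does: the bracket in the formula for $\tilde g_{2y}(w)$ is at least $(1-O(\alpha^2))|F_{1x}(A_z(w))|\ge (1-O(\alpha^2))e^{-C_1}|F_{1x}(z)|$ by the cone estimates and \eqref{8}, while the prefactor $1/(J_FJ_{A_{F^{-1}(z)}})$ is bounded below precisely because of \ref{A2}, $\sup_iJ_{F_i}<\infty$ --- an assumption your proposal never invokes. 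Your proof of \eqref{10} is repaired by dropping the comparison with the base point and applying \ref{A2} at the single evaluation point, but as written the distortion step fails.
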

\begin{proof}
Due to the above conditions and the explicit formula of $\tilde{g}_{1x}$, it only suffices to estimate the value of
$$H(z):=\frac{a(z)b({F^{-1}(z)})F_{1x}(z)}{J_F(z)J_{A_{F^{-1}(z)}}}.$$
As mentioned before, $|a(z)|\leq |F_{2x}(z)|$ and $|b(z)|\leq|F_{1y}(z)|$. So,
\begin{align*}
|H(z)|&\leq \frac{|(F_{1y}(z)F_{2x}(F^{-1}(z)))F_{1x}(z)|}{|(F_{1x}(z)F_{2y}(z)-F_{1y}(z)F_{2x}(z))(1-F_{1y}(F^{-1}(z))F_{2x}(F^{-1}(z)))|}\\
&\leq 2 \frac{|F_{1y}(z)F_{2x}(F^{-1}(z))|}{|F_{2y}(z)|}.
\end{align*}
The last inequality holds by \ref{A4} and the last term is bounded due to \ref{A3}. This proves \eqref{9}.

For \eqref{10} and \eqref{11}, we claim that $|\tilde{g}_{2y}(w)|\geq C|F_{1x}(z)|$, for some positive constant $C$. First, note that $D_{A_z}F^{-1}\big( A_z\begin{bmatrix}0 \\ 1\end{bmatrix}\big)$ belongs to the stable cone $\mathcal{C}_\alpha^s$ and so is a multiple of some vector $\begin{bmatrix}b \\ 1\end{bmatrix}$ for $|b|\leq \alpha$. Thus,
$$D_w\tilde{F}^{-1}\begin{bmatrix}0 \\ 1\end{bmatrix}=\begin{bmatrix}\tilde{g}_{1y}(w) \\ \tilde{g}_{2y}(w)\end{bmatrix}$$
is a multiple of
$$\begin{bmatrix}b-b({F^{-1}(z)}) \\ -ba({F^{-1}(z)})+1\end{bmatrix},$$

and hence,
$$\left|\begin{bmatrix}\tilde{g}_{1y}(w) \\ \tilde{g}_{2y}(w)\end{bmatrix}\right|=\max\left\{|\tilde{g}_{1y}(w)|,|\tilde{g}_{2y}(w)|\right\}\leq |\tilde{g}_{2y}(w)|\max \left\{\frac{2\alpha}{1-\alpha^2},1\right\}.$$
Since $A_{F^{-1}(z)}$ is uniformly bounded, using \eqref{6} and \eqref{8}, one gets
\begin{align*}
\left|D_w\tilde{F}^{-1}\begin{bmatrix}0 \\ 1\end{bmatrix}\right|&\geq K\left| D_{A_z(w)}F^{-1}\begin{bmatrix}b(z) \\ 1\end{bmatrix}\right|\\
&\geq K\left|\frac{1}{J_F(z)}(-b(z)F_{2x}(A_z(w))+F_{1x}(A_z(w)))\right|\\
&\geq K'\left(|F_{1x}(A_z(w))|-\alpha^2 |F_{1x}(A_z(w))|\right)\geq K''|F_{1x}(z)|,
\end{align*}
for some positive constants $K$, $K'$ and $K''$. This proves \eqref{10}, because $\inf_{z\in \tilde{S}}|F_{1x}(z)|>1$. On the other hand, by using \eqref{5}, \eqref{6}, \eqref{7} and \eqref{8} in the explicit formula of $\tilde{g}_{2x}(w)$,
$$|\tilde{g}_{2x}(w)|\leq C(\alpha)|F_{1x}(A_z(w))|\leq \bar{C}|F_{1x}(z)|.$$
This proves \eqref{11}.
Using \ref{H2}, one gets
$$\frac{|\tilde{g}_{1x}(z)|}{|\tilde{g}_{2y}(z)|}\leq \frac{1}{K_0},$$
so for $w$ sufficiently close to $z$, equation \eqref{12} holds.
\end{proof}
Now we are ready to prove Proposition \ref{pro4}.
\begin{proof}[Proof of Proposition \ref{pro4}.]
Fix $[\A]_n$. Let $z\in U_{[\A]_n}(x_1)\cap \tilde{S}$ be an arbitrary point and $\gamma$ be a $C^2$ unstable curve through $z$. Let $w=\gamma\cap U_{[\A]_n}(x_2)$. There exist $\tau_{1n}\in U_{[\A]_n}(x_1)$ and $\tau_{2n}\in U_{[\A]_n}(x_2)$ such that for $j=1,2$, the following holds:
$$|F^{-1}_{[\A]_n}(U_{[\A]_n}(x_j))|=|D^sF^{-1}_{[\A]_n}(\tau_{jn})|| U_{[\A]_n}(x_j)|.$$
So, by Proposition \ref{R1}, it suffices to show that
\begin{equation}\label{13}
\bar{K}^{-1}<\frac{|D_z^sF^{-1}_{[\A]_n}|}{|D_w^sF^{-1}_{[\A]_n}|}<\bar{K}
\end{equation}
holds for some $\bar{K}$.
\begin{figure}
\def\svgwidth{7cm}
\includegraphics{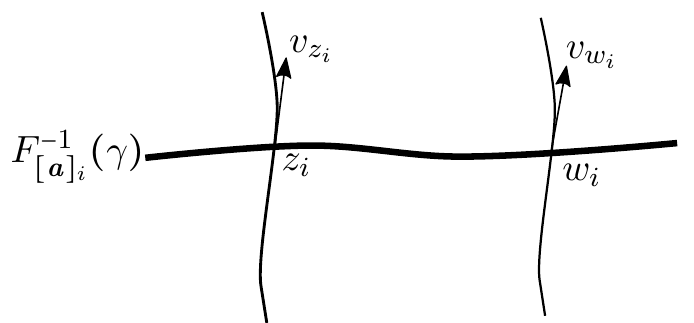}
\centering
\caption{Position of $z_i$, $w_i$, $v_{z_i}$ and $v_{w_i}$}
\label{GH2}
\end{figure}
Let $z_i=F^{-1}_{[\A]_i}(z)$ and $w_i=F^{-1}_{[\A]_i}(w)$, for $i=1,\ldots,n$. As before, we use the affine coordinates induced by splitting $\mathbb{R}^2=E^u_{z_i}\oplus E^s_{z_i}$ at $z_i$, where $E^u_{z_i}$ contains the tangent vector to $F^{-1}_{[\A]_i}(\gamma)$ at $z_i$ and $E^s_{z_i}$ is tangent to the stable manifold at $z_i$. Let $\tilde{F}_{a_i}^{-1}$ be the representation of  $F_{a_i}^{-1}$ in this coordinates and $\tilde{B}_i$ be the small parallelogram centered at $z_i$ using the max norm. One has that
$$\frac{|D_z^sF^{-1}_{[\A]_n}|}{|D_w^sF^{-1}_{[\A]_n}|}\leq \const \prod_{i=1}^{n}\frac{|D_{z_i}\tilde{F}_{a_i}^{-1}(v_{z_i})|}{|D_{w_i}\tilde{F}_{a_i}^{-1}(v_{w_i})|},$$
where $v_{z_i}$ and $v_{w_i}$ are the unit tangent vector to $F^{-1}_{[\A]_i}(U_{[\A]_i}(x_1))$ at $z_i$ and $F^{-1}_{[\A]_i}(U_{[\A]_i}(x_2))$ at $w_i$, respectively (see Figure \ref{GH2}). For proving \eqref{13}, it is needed to show that
$$\sum_{i=1}^{n}\log |D_{z_i}\tilde{F}_{a_i}^{-1}(v_{z_i})|-\log |D_{w_i}\tilde{F}_{a_i}^{-1}(v_{w_i})|$$
is uniformly bounded. By the H{\"o}lder continuity of logarithm, there exists $C>0$ such that the above term is less than $C$ times of the following quantity,
$$\sum_{i=1}^{n}|D_{z_i}\tilde{F}_{a_i}^{-1}(v_{z_i})-D_{w_i}\tilde{F}^{-1}(v_{w_i})|\leq \sum_{i=1}^{n}|D_{z_i}\tilde{F}_{a_i}^{-1}(v_{z_i}-v_{w_i})|\\+\sum_{i=1}^{n}|D_{z_i}\tilde{F}_{a_i}^{-1}-D_{w_i}\tilde{F}_{a_i}^{-1}||v_{w_i}|.$$
Since $|v_{w_i}|=1$, using the mean value theorem, the second term on the right is less than
$$\max_{z\in\tilde{S}} |D^2\tilde{F}_{a_i}^{-1}|\sum_{i=1}^n |z_i-w_i|\leq
\max_{z\in\tilde{S}} |D^2\tilde{F}_{a_i}^{-1}|\sum_{i=1}^\infty |z_i-w_i|,$$
which is bounded, independent of $n$, since $|z_i-w_i|\leq \const d_u(z_i,w_i)\leq\const K_0^{-i}$, where $d_u$ is the metric induced by the Riemannian metric on the unstable curve. Hence, it is suffices to show that
$$\sum_{i=1}^{n}|D_{z_i}\tilde{F}_{a_i}^{-1}(v_{z_i}-v_{w_i})|$$
is bounded and the bound is independent of $n$.

In these affine coordinates, one has that $v_{z_i}=\begin{bmatrix}0\\1 \end{bmatrix}$. Also, for sufficiently large $i$, $v_{w_i}\in \mathcal{C}^s_{C_4}$, where $C_4$ is from Lemma \ref{4.1}. Let $v_{w_i}=(u_{1,i},u_{2,i})$ and $D_{w_{i-1}}\tilde{F}_{a_{i-1}}^{-1}(v_{w_{i-1}})=(\xi_i,\eta_i)$. So,
\begin{align*}
\xi_i&=\tilde{g}_{1x}(w_{i-1})u_{1,i-1}+\tilde{g}_{1y}(w_{i-1})u_{2,i-1}\\
\eta_i&=\tilde{g}_{2x}(w_{i-1})u_{1,i-1}+\tilde{g}_{2y}(w_{i-1})u_{2,i-1}.
\end{align*}
Since $|D_{w_{i-1}}\tilde{F}^{-1}(v_{w_{i-1}})|=|\eta_i|$, $u_{1,i}=\xi_i/|\eta_i|$ and $u_{2,i}=1$. Then, by \eqref{9}, one has that
$$|D_{z_i}\tilde{F}^{-1}(v_{z_i}-v_{w_i})|=|\tilde{g}_{1x}(z_i)|\frac{|\xi_i|}{|\eta_i|}\leq C_2\frac{|\xi_i|}{|\eta_i|}.$$
Now, we try to bound the sum of the last fraction. By \eqref{11}, for sufficiently large $i$, one has that
\begin{align*}
|\eta_i |=|\tilde{g}_{2x}(w_{i-1})u_{1,i-1}+\tilde{g}_{2y}(w_{i-1})u_{2,i-1}|=&|\tilde{g}_{2y}(w_{i-1})|\left|\frac{\tilde{g}_{2x}(w_{i-1})}{\tilde{g}_{2y}(w_{i-1})}
\frac{u_{1,i-1}}{u_{2,i-1}}+1\right|\\\geq& |\tilde{g}_{2y}(w_{i-1})|(1-C_4^2).
\end{align*}
So,
$$|u_{1,i}|=\frac{|\xi_i|}{|\eta_i|}\leq \frac{1}{1-C_4^2}\left(\frac{|\tilde{g}_{1x}(w_{i-1})|}{|\tilde{g}_{2y}(w_{i-1})|}|u_{1,i-1}|+\frac{|\tilde{g}_{1y}(w_{i-1})|}{|\tilde{g}_{2y}(w_{i-1})|}\right).$$
On the other hand, by \eqref{11} and equality $\tilde{g}_{1y}(z_{i-1})=0$, for sufficiently large $i$ there exists $\tau_{i-1}$ such that
\begin{align*}
\frac{|\tilde{g}_{1y}(w_{i-1})|}{|\tilde{g}_{2y}(w_{i-1})|}&\leq \frac{|\tilde{g}_{1yx}(\tau_{i-1})|}{|\tilde{g}_{2y}(w_{i-1})|}|w_{i-1}-z_{i-1}|+\frac{|\tilde{g}_{1yy}(\tau_{i-1})|}{|\tilde{g}_{2y}(w_{i-1})|}|w_{i-1}-z_{i-1}|\\
&\leq \frac{2}{C_4}\max_{\tau\in\tilde{S}}|D^2\tilde{F}^{-1}(\tau)||w_{i-1}-z_{i-1}|.
\end{align*}
Then, by \eqref{12} and inequality $|z_{i}-w_{i}|\leq \const K_0^{-i}$, one has that
$$\frac{|\xi_i|}{|\eta_i|}\leq \frac{1}{(1-C_4^2)K_0^2}|u_{1,i-1}|+\tilde{C}\left(\frac{1}{K_0}\right)^{i-1}.$$
Supposing inductively that $|u_{1,i-1}|\leq 2\tilde{C}\left(\frac{1}{K_0}\right)^{i-2}$, one gets
$$|u_{1,i}|=\frac{|\xi_i|}{|\eta_i|}\leq \frac{2\tilde{C}}{(1-C_4^2)K_0^2}\left(\frac{1}{K_0}\right)^{n-2}+\tilde{C}\left(\frac{1}{K_0}\right)^{i-1}.$$
Assuming that $C_4<\frac{1}{4}$ and $K_0>3$, one gets
$$|u_{1,i}|\leq 2\tilde{C}\left(\frac{1}{K_0}\right)^{i-1}.$$
Therefore,
$$\sum_{i=1}^n |D_{z_i}\tilde{F}^{-1}(v_{z_i}-v_{w_i})|\leq C_2\sum_{i=1}^n |u_{1,i}|<2C_2\tilde{C}\sum_{i=1}^\infty\left(\frac{1}{K_0}\right)^{i-1}.$$
The last sum converges and this finishes the proof of Proposition \ref{pro4}.
\end{proof}
\begin{remark}\label{R2}
Fix two finite words $[\A]_n$ and $[\B]_m$,
\begin{itemize}
\item there exists a constant $K_3$ such that the components of $\hat{U}_{[\A]_n}(x)\setminus U_{[\A]_n}(x)$ have length not smaller than $K_3\cdot |\hat{U}_{[\A]_n}(x)|$,
\item there exists a constant $K_4$ such that
$$K_4^{-1}\leq \frac{d([\A]_n[\B]_m)}{d([\A]_n)d([\B]_m)}\leq K_4.$$
\end{itemize}
\end{remark}
The first part of the remark above is a simple application of the classical bounded distortion property. The second part follows from Proposition \ref{pro4} and the fact that $F_{[\A]_n[\B]_m}=F_{[\B]_m}\circ F_{[\A]_n}$.

Let $B^s_r(z)=\{w\in W^s(z) \;|\; d_s(z,w)<r\}$ for $r\in (0,+\infty)$. Combining Proposition \ref{pro4} and Remark \ref{R2}, one gets the following corollary:
\begin{corollary}\label{c1}
If $r<K_3K_2^{-1}d([\A]_n)$ and $B^s_r(z)$ intersects $U_{[\A]_n}$ then $z\in \hat{U}_{[\A]_n}$.
\end{corollary}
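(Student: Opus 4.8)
The plan is to reduce the statement to a one–dimensional comparison on the single stable leaf through $z$: along that leaf the fat strip $\hat U_{[\A]_n}$ exceeds the genuine strip $U_{[\A]_n}$ by a definite ``collar'', and the hypothesis on $r$ says precisely that this collar is wider than $r$, so a point $z$ lying within stable distance $r$ of $U_{[\A]_n}$ cannot escape $\hat U_{[\A]_n}$.

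First I would record a witness of the intersection. Since $B^s_r(z)$ is an arc of the stable manifold $W^s(z)$, the assumption that it meets $U_{[\A]_n}$ yields a point $w\in B^s_r(z)\cap U_{[\A]_n}$. Writing $x:=p^s(z)$ so that $W^s(z)=W^s(x)$, both $z$ and $w$ lie on this single leaf; in particular $w\in U_{[\A]_n}(x)$ and $d_s(z,w)<r$. Thus everything now takes place inside the arc $\hat U_{[\A]_n}(x)\subseteq W^s(x)$, which contains the sub-arc $U_{[\A]_n}(x)$.

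Next I would produce the two quantitative ingredients. Proposition \ref{pro4}, in its version for $\hat U_{[\A]_n}$, gives $K_2^{-1}<|\hat U_{[\A]_n}(x_1)|/|\hat U_{[\A]_n}(x_2)|<K_2$ for almost all $x_1,x_2$; taking $x_2$ where the maximum defining $d([\A]_n)$ is attained yields the lower bound $|\hat U_{[\A]_n}(x)|\ge K_2^{-1}d([\A]_n)$. Then the first part of Remark \ref{R2} guarantees that each component of the collar $\hat U_{[\A]_n}(x)\setminus U_{[\A]_n}(x)$ has length at least $K_3\,|\hat U_{[\A]_n}(x)|\ge K_3K_2^{-1}d([\A]_n)$, which by hypothesis strictly exceeds $r$.

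Finally I would close the argument geometrically. Measuring arc length along $W^s(x)$, the arc $U_{[\A]_n}(x)$ is flanked on each side by a collar component of length $>r$ before one reaches the endpoints of $\hat U_{[\A]_n}(x)$. Since $w\in U_{[\A]_n}(x)$ and $z$ is obtained from $w$ by moving along the leaf a stable distance $d_s(z,w)<r$, the point $z$ cannot pass beyond either endpoint of $\hat U_{[\A]_n}(x)$, whence $z\in \hat U_{[\A]_n}(x)\subseteq \hat U_{[\A]_n}$. The only genuine care needed---and the step I expect to be the main obstacle---is the passage from the one-sided collar width to two-sided confinement: one must know that $U_{[\A]_n}(x)$ and $\hat U_{[\A]_n}(x)$ are genuine connected sub-arcs of the stable leaf with the collar surrounding $U_{[\A]_n}(x)$ on both sides, which follows from the strips being full-width regions bounded by graphs of smooth functions, so that their stable slices are intervals.
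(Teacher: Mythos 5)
Your proof is correct and takes essentially the same route as the paper, which obtains Corollary \ref{c1} precisely by combining Proposition \ref{pro4} (applied to $\hat{U}_{[\A]_n}$ to get $|\hat{U}_{[\A]_n}(x)|\geq K_2^{-1}d([\A]_n)$ for almost every $x$) with the first part of Remark \ref{R2} (so each component of $\hat{U}_{[\A]_n}(x)\setminus U_{[\A]_n}(x)$ has length at least $K_3K_2^{-1}d([\A]_n)>r$), forcing the point $z$ to stay inside $\hat{U}_{[\A]_n}$. The two technical points you flag---connectedness of the stable slices and the passage from the maximum defining $d([\A]_n)$ to the almost-everywhere comparison---are exactly the details the paper leaves implicit, and your handling of them is sound.
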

\subsection{Proof of Theorem \ref{T1}}
The analytical approach in \cite{T} needs the explicit formula of $F$ and it is not applicable here, although the sufficient condition for the SRB measure $\mu_F$ to be absolutely continuous prepared in \cite{T} is still the key tool to prove Theorem \ref{T1}.

For the SRB measure $\mu_F$, there exist probability measures $\mu_x$ along the stable manifolds $W^s(x)$ which we may write
$$\mu_F=\int \mu_x d\mu_g(x).$$
Tsujii proved the following remarkable proposition in \cite{T}.
\begin{proposition}
For a positive real number $r$, let
$$\Vert \mu_x\Vert_r^2=\int_\mathbb{R} (\mu_x(B_r^s(z)))^2dz$$
and
$$I(r)=r^{-2}\int_0^1\Vert \mu_x\Vert_r^2dx.$$
If $\liminf_{r\rightarrow0}I(r)<\infty$ then $\mu_F$ is absolutely continuous with respect to the Lebesgue measure and its density function is square integrable.
\end{proposition}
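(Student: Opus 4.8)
The plan is to recognize $I(r)$ as, up to a bounded factor, the squared $L^2$-norm of a scale-$r$ mollification of $\mu_F$, and then to extract an $L^2$ density for $\mu_F$ by weak compactness in a Hilbert space. First I would introduce the smoothing of each conditional measure. Note that $\frac{1}{2r}\mu_x(B^s_r(z))$ is precisely the density, with respect to arc length $dz$ along $W^s(x)$, of the convolution of $\mu_x$ with the normalized indicator of a stable ball of radius $r$; it is nonnegative and satisfies $\int_\mathbb{R}\frac{1}{2r}\mu_x(B^s_r(z))\,dz=1$. Writing $\mu_g=\rho_g\,dx$ (the Folklore theorem guarantees $\rho_g$ bounded above and below), I define a probability measure $\mu_F^r$ on $S$ by prescribing, in coordinates $(x,z)$ adapted to the stable foliation, the leafwise density
$$ d\mu_F^r(x,z)=\frac{1}{2r}\,\mu_x\!\left(B^s_r(z)\right)\rho_g(x)\,dz\,dx. $$

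Second, I would verify that $\mu_F^r$ converges weakly-$*$ to $\mu_F$. For continuous $\phi\colon S\to\mathbb{R}$, Fubini gives
$$ \int\phi\,d\mu_F^r=\int_0^1\!\!\int_{W^s(x)}\!\left(\frac{1}{2r}\int_{B^s_r(w)}\!\phi(z)\,dz\right)d\mu_x(w)\,d\mu_g(x), $$
and the inner average of $\phi$ over the stable ball of radius $r$ about $w$ converges uniformly to $\phi(w)$ as $r\to0$ by uniform continuity, so the right-hand side tends to $\int\phi\,d\mu_F$.

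Third, and this is the crux, I would bound $\|\mu_F^r\|_{L^2}$ by $I(r)$. Passing from the adapted coordinates $(x,z)$ to ambient Lebesgue measure introduces a Jacobian $\Delta(x,z)$, so that the density of $\mu_F^r$ against Lebesgue is $f^r=\frac{1}{2r}\mu_x(B^s_r(z))\rho_g(x)/\Delta(x,z)$. The absolute continuity of the stable lamination established in Section 3, together with the uniform bound on stable slopes furnished by the cone condition, forces $\Delta$ to be bounded above and below; combined with the boundedness of $\rho_g$ this yields
$$ \|\mu_F^r\|_{L^2}^2=\int_S |f^r|^2\,d\leb\leq C\int_0^1\!\!\int_\mathbb{R}\Big(\tfrac{1}{2r}\mu_x(B^s_r(z))\Big)^2\,dz\,dx=\frac{C}{4}\,I(r), $$
where $C$ depends only on $\|\rho_g\|_\infty$ and the bounds on $\Delta$.

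Finally, assuming $\liminf_{r\to0}I(r)<\infty$, I would choose $r_n\to0$ with $\sup_n I(r_n)<\infty$. By the previous estimate $\{\mu_F^{r_n}\}$ is bounded in the Hilbert space $L^2(S,\leb)$, hence, after passing to a subsequence, converges weakly to some $\rho\in L^2$. Testing against a continuous $\phi$ and invoking the weak-$*$ convergence of the second step, $\int\phi\rho\,d\leb=\lim_n\int\phi\,d\mu_F^{r_n}=\int\phi\,d\mu_F$, so $\mu_F=\rho\,\leb$ with $\rho$ square integrable; weak lower semicontinuity of the norm gives $\|\rho\|_{L^2}\le\liminf_n\|\mu_F^{r_n}\|_{L^2}<\infty$. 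I expect the main obstacle to be the third step: controlling the change-of-variables Jacobian $\Delta$ uniformly across the infinitely many strips and the discontinuities, which is exactly where the absolute continuity and the essential smoothness of the holonomy Jacobian for the stable foliation become indispensable.
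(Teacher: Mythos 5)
The paper itself contains no proof of this proposition: it is imported as a black box from Tsujii \cite{T} (``Tsujii proved the following remarkable proposition''), and all of the paper's own work goes into verifying its hypothesis $\liminf_{r\to0}I(r)<\infty$, not its conclusion. So the only meaningful comparison is with Tsujii's original argument, and your proposal is essentially that argument: mollify the conditional measures at scale $r$, observe that $I(r)$ dominates the squared $L^2$-norm of the mollified measure, extract a weak $L^2$ limit along a sequence $r_n\to0$ realizing the $\liminf$, and identify that limit with the density of $\mu_F$ through the weak-$*$ convergence of the mollifications. Your four steps are sound: the normalization $\int_{\mathbb{R}}\frac{1}{2r}\mu_x(B^s_r(z))\,dz=1$, the uniform convergence of the leafwise averages of a continuous $\phi$, the estimate $\|\mu_F^r\|_{L^2}^2\le \frac{C}{4}I(r)$, and the weak-compactness/identification step correctly yield $\mu_F=\rho\,\leb$ with $\rho\in L^2$, with $\|\rho\|_{L^2}$ controlled by $\liminf I(r_n)$.

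The one place where your write-up is thinner than it should be is the point you yourself flag, and it is also the only place where the curved-leaf setting differs from \cite{T}: in Tsujii's model the stable fibers are the vertical lines $\{x\}\times\mathbb{R}$, so your Jacobian satisfies $\Delta\equiv1$ and the third step is trivial there. Here, be aware that absolute continuity of the stable lamination by itself (an integrable holonomy Jacobian) does \emph{not} force $\Delta$ to be bounded above and below; what you need is a uniform two-sided bound on the holonomy Jacobian. In the present setting that stronger fact does hold: the leaves are uniformly Lipschitz graphs by the cone conditions, and the Jakobson--Newhouse bounded-distortion estimates \cite{JN} give uniform two-sided bounds on the holonomy Jacobian --- this is precisely what the paper invokes (via essential smoothness of $J$) to make $g$ piecewise expanding and to obtain $l<d\mu_g/dx<L$ from Adler's theorem. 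If you justify the bounds on $\Delta$ by citing that bounded-distortion statement rather than bare absolute continuity, your proof is complete.
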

We use the geometric approach of Rams in \cite{R} in order to prove that the SRB measure is an ACIP.
Denote the inverse branches of $g$ by $g_i$ for $i\in \mathbb{N}$ and let $g_{[\A]_n}:=g_{a_1}\circ \cdots \circ g_{a_n}$. Due to Adler's Theorem \cite{A} and \ref{A1}, there exist positive constants $l$ and $L$ such that
$$l<\frac{d\mu_g}{dx}<L,$$
so clearly the following inequalities hold
\begin{equation}\label{14}
\frac{l}{L}|I_{[\A]_n}|\leq \frac{d}{dx}g_{[\A]_n}\leq \frac{L}{l}|I_{[\A]_n}|
\end{equation}
and
\begin{equation}\label{15}
|I_{[\A]_n[\B]_m}|\leq\frac{L}{l}|I_{[\A]_n}| |I_{[\B]_m}|.
\end{equation}
\begin{remark}\label{l1}~
\begin{itemize}
\item
For any two $\delta$-transversal finite words $[\A]_n$ and $[\B]_m$, any two subword $[\A]_j^i=(a_i,\ldots,a_j)$ and $[\B]_l^k=(b_k,\ldots,b_l)$ are $\delta$-transversal.
\item
For any $0<c_1<c_2$,
$$\sum_{[\A]_n;c_1<d([\A]_n)<c_2}|I_{[\A]_n}|\leq 1+\frac{\log c_2-\log c_1}{\log m},$$
where $m$ is the infimum of $g'$ on $\bigcup_{i\in\mathbb{N}}I_i$. The inequality holds since an interval of length $c_2$, contains at most $(\log c_2-\log c_1)/\log m$ disjoint intervals of length $c_1$.
\end{itemize}
\end{remark}
Now, we are prepared to prove Theorem \ref{T1}.
\begin{proof}[Proof of Theorem \ref{T1}]
For sufficiently small $r$, let $N(r)\in \mathbb{N}$ be the biggest number such that for any $i\in\{1,\ldots,N(r)\}$, $\inf_{z\in S_i}D^sF(z)>r$. For any $\A\in \Sigma_{N(r)}^\infty=\{(a_i)_{i=1}^\infty|a_i\in\{1,\ldots,N(r)\}\}$, let cylinder $Z_{[\A]_n}$ be the set of all words in $\Sigma_{N(r)}^\infty$ that begin with $[\A]_n$. Then, the cylinders $\{Z_{[\A]_n}; [\A]_n\in M(r)\}$ form a finite disjoint cover of $\Sigma_{N(r)}^\infty$, and
\begin{equation}\label{16}
\lim_{r\rightarrow0}\sum_{[\A]_n\in M(r)}|I_{[\A]_n}|=1.
\end{equation}
Since $\mu_F$ is invariant, for any $x\in[0,1]$, $z\in W^s(x)$ and $r>0$,
\begin{equation}\label{Inv}
\mu_x(B_r^s(z))=\lim_{n\to\infty}\sum_{\substack{[\A]_n;B_r^s(z)\cap U_{[\A]_n}(x)\neq\emptyset\\ y\in I_{[\A]_n},g^n(y)=x }}\frac{\mu_y\big(F_{[\A]_n}^{-1}(B_r^s(z))\big)}{(g^n)'(y)}\leq\lim_{n\to\infty}\sum_{[\A]_n;B_r^s(z)\cap U_{[\A]_n}(x)\neq\emptyset} |I_{[\A]_n}|.
\end{equation}
Fix a small positive $r$ and let $R=K_3^{-1}K_2r$. According to Corollary \ref{c1}, \eqref{14} and \eqref{Inv}, one has that
$$\mu_x(B_r^s(z))\leq L\sum_{[\A]_n\in M(R);z\in\hat{U}_{[\A]_n}(x)} |I_{[\A]_n}|.$$
Then
$$\Vert\mu_x\Vert_r^2\leq L^2\sum_{[\A]_n\in M(R)}\sum_{[\B]_m\in M(R)}|(\hat{U}_{[\A]_n}(x)\cap\hat{U}_{[\B]_m}(x)| |I_{[\A]_n}| |I_{[\B]_m}|$$
and
\begin{equation}\label{17}
I(r)\leq r^{-2}L^2\sum_{[\A]_n\in M(R)}\sum_{[\B]_m\in M(R)}\mathrm{vol}(\hat{U}_{[\A]_n}\cap\hat{U}_{[\B]_m})|I_{[\A]_n}| |I_{[\B]_m}|.
\end{equation}
Now, for $\delta>0$ put
$$I^{Tr}_\delta(r)=r^{-2}L^2\sum_{([\A]_n,[\B]_m)\in {\bf M}_\delta^{Tr}(R) }\mathrm{vol}(\hat{U}_{[\A]_n}\cap\hat{U}_{[\B]_m}) |I_{[\A]_n}| |I_{[\B]_m}|$$
and similarly put $I^{NTr}_\delta(r)$ for the sum corresponds to  ${\bf M}_\delta^{NTr}(R)$. 
By (\ref{17}), $I(r)\leq I_\delta^{Tr}(r)+ I_\delta^{NTr}(r)$. In view of the transversality condition, there is $\delta_0$ such that $I^{NTr}_{\delta_0}(r)<\infty$.
Hence, to bound $I(r)$, it is sufficient to bound $I_{\delta_0}^{Tr}(r)$. Following Rams \cite{R}, for $i\geq0$, put
\begin{equation}\label{18}
I_i(r)=r^{-2}L^2\sum_{[\A]_n}\sum_{[\B]_m}\sum_{[\C]_i}\mathrm{vol}(\hat{U}_{[\C]_i[\A]_n}\cap\hat{U}_{[\C]_i[\B]_m}) |I_{[\C]_i[\A]_n}||I_{[\C]_i[\B]_m}|,
\end{equation}
where the sum is taken over such words that $([\C]_i[\A]_n,[\C]_i [\B]_m)\in {\bf M}^{Tr}_{\delta_0}(R)$ and $a_1\neq b_1$. So,
$$I_{\delta_0}^{Tr}(r)= I_0(r)+\cdots+I_i(r)+\cdots.$$
According to Remark \ref{l1}, \eqref{16} and \ref{A2}, one gives
$$\sum_{[\A]_n\in M(R)}\sum_{[\B]_m\in M(R)}|I_{[\A]_n}||I_{[\B]_m}|<1,$$
and so $I_0(r)\leq K_5$ for some constant $K_5$.

Now, fix $i>0$. By Remark \ref{R2} and the definition of $M(R)$, one has that
\begin{equation}\label{19}
\frac{R}{K_4d([\C]_i)}\leq d([\A]_n)\leq \frac{K_4R}{d([\C]_i)}.
\end{equation}
The above inequality also holds for $d([\B]_m)$. Hence
\begin{equation}\label{20}
K_4^{-2}\leq \frac{d([\A]_n)}{d([\B]_m)}\leq K_4^{2}
\end{equation}
From \eqref{15}, \eqref{18} and \eqref{19}, we have
$$I_i(r)\leq \sum_{[\A]_n}\sum_{[\B]_m}\sum_{[\C]_i}L^4K_3^2K_2^{-2}K_4^2l^{-2}\frac{|I_{[\A]_n}||I_{[\B]_m}|(| I_{[\C]_i}|)^2}{d([\A]_n)d([\B]_m)(d([\C]_i))^2}\mathrm{vol}(\hat{U}_{[\C]_i[\A]_n}\cap\hat{U}_{[\C]_i[\B]_m}).$$
For fixed $[\A]_n$ and $[\B]_m$, one has that
$$\bigcup_{[\C]_i}(\hat{U}_{[\C]_i[\A]_n}\cap\hat{U}_{[\C]_i[\B]_m})=\bigcup_{[\C]_i}F_{[\C]_i} (\hat{U}_{[\A]_n}\cap\hat{U}_{[\B]_m}),$$
since the contraction of each map $F_{[\C]_i}$ along the vertical direction is at least $K_4d([\C]_i)/|J|$ times and its expansion along the horizontal direction is at most $Ll^{-1}/|I_{[\C]_p}|$. Therefore
\begin{equation}\label{21}
\sum_{[\C]_i}\frac{|I_{[\C]_i}|}{d([\C]_i)}\mathrm{vol}(\hat{U}_{[\C]_i[\A]_n}\cap\hat{U}_{[\C]_i[\B]_m})\leq \frac{K_4L}{l| J|}\mathrm{vol}(\hat{U}_{[\A]_n}\cap\hat{U}_{[\B]_m}).
\end{equation}
So, \eqref{21} and Proposition \ref{R1} imply that
$$I_i(r)\leq \sum_{[\A]_n}\sum_{[\B]_m} \frac{L^5K_2^2K_4^3}{\delta_0l^3K_3^2 |J|}|I_{[\A]_n}||I_{[\B]_m}|\sup\frac{| I_{[\C]_i}|}{d([\C]_i)}.$$
Let
$$\underline{\eta_i}=\inf_{z\in \bigcup_{j=1}^{N(R)}S_j}D^s  F_{[\C]_i}(z) \quad \text{and} \quad \overline{\eta_i}=\sup_{z\in \bigcup_{j=1}^{N(R)}S_j}D^s  F_{[\C]_i}(z).$$
By bounded distortion property, Proposition \ref{R1}, we have $K^{-1}\leq\overline{\eta_i}/\underline{\eta_i}\leq K$ (where $K$ is independent of the choice of $[\C]_i$). So by \eqref{19} and the fact that $\underline{\eta_i} |J|\leq d([\C]_i)\leq \overline{\eta_i}|J|$, and Remark \ref{l1}, we get
$$\sum_{\substack{[\C]_i[\A]_n\in M(R)\\ \text{for some}\; [\C]_i}} |I_{[\A]_n}|=1+\frac{2\log K_4+\log \overline{\eta_i}-\log \underline{\eta_i}}{\log m} \leq 1+\frac{2\log K_4+\log K}{\log m},$$
and the same holds for $I_{[\B]_m}$. Therefore
$$I_i(r)\leq \frac{L^5K_3^2K_4^3}{\delta_0l^3K_2^2|J|}\left(1+\frac{2\log K_4+\log K}{\log m}\right)^2\sup\frac{|I_{[\C]_i}|}{d([\C]_i)}.$$
The fatness condition implies that
$$\frac{|I_{[\C]_i}|}{d([\C]_i)}\leq K_1 (d([\C]_i))^\epsilon\leq K_1|J|^\epsilon M^{i\epsilon},$$
where $M=\sup_{z\in\Lambda}D^sF(z)<1$, so
$$I_i(r)\leq \const M^{i\epsilon},$$
and thus \eqref{17} are uniformly summable.
\end{proof}

\end{document}